\DeclareMathOperator{\R}{\mathbb{R}}
\DeclareMathOperator{\N}{\mathbb{N}}
\DeclareMathOperator{\softplus}{softplus}
\DeclareMathOperator{\calB}{\mathcal{B}}
\DeclareMathOperator{\Esp}{\mathbb{E}}
\newcommand{\shortminus}{\scalebox{0.5}[1.0]{$-$}}
\def\xnom{x^{\textnormal{nom}}}
\def\fnom{f^{\textnormal{nom}}}
\def\xpi{x^{\textnormal{pi}}}
\def\Vpi{V^{\textnormal{pi}}}
\newtheorem{prop}{Proposition}[section]
\crefname{lemma}{lemma}{lemmas}
\Crefname{lemma}{Lemma}{Lemmas}
\crefname{theorem}{theorem}{theorems}
\Crefname{theorem}{Theorem}{Theorems}
\crefname{prop}{proposition}{propositions}
\Crefname{prop}{Proposition}{Propositions}
\newtheorem{hypo}{Assumption}[section]
\crefname{hypo}{assumption}{assumptions}
\Crefname{hypo}{Assumption}{Assumptions}
\DeclareMathSymbol{\mlq}{\mathord}{operators}{``}
\DeclareMathSymbol{\mrq}{\mathord}{operators}{`'}
\newcommand*\frob{\mathpalette\bigcdot@{.7}}
\newcommand*\bigcdot@[2]{\mathbin{\vcenter{\hbox{\scalebox{#2}{$\m@th#1\bullet$}}}}}
\newcommand{\labitem}[2]{%
\def\@itemlabel{\textbf{#1}}
\item
\def\@currentlabel{#1}\label{#2}}
\def\blfootnote{\gdef\@thefnmark{}\@footnotetext}
\setlist[enumerate]{leftmargin=.5in}
\setlist[itemize]{leftmargin=.5in}
\crefname{hypothesis}{Hypothesis}{Hypotheses}
\crefname{fact}{Fact}{Facts}
\title{\textbf{A Rank-Based Reward between a Principal and a Field of Agents: Application to Energy Savings}}
\author{Clémence Alasseur\thanks{EDF Lab Saclay, Palaiseau, France (\email{\{clemence.alasseur,quentin.jacquet\}@edf.fr})}
\and Erhan Bayraktar\thanks{Department of Mathematics, University of Michigan, USA (\email{erhan@umich.edu})}
\and Roxana Dumitrescu\thanks{Department of Mathematics, King's College London, United Kingdom (\email{roxana.dumitrescu@kcl.ac.uk})}
\and Quentin Jacquet\footnotemark[1] \thanks{INRIA, CMAP, Ecole Polytechnique, Palaiseau, France}
}
\begin{document}

\maketitle

\begin{abstract}
In this paper, we consider the problem of a Principal aiming at designing a reward function for a population of heterogeneous agents. 
We construct an incentive based on the ranking of the agents, so that a competition among the latter is initiated. We place ourselves in the limit setting of mean-field type interactions and prove the existence and uniqueness of the equilibrium distribution for a given reward, for which we can find an explicit representation. Focusing first on the homogeneous setting, we characterize the optimal reward function using a convex reformulation of the problem and provide an interpretation of its behavior. We then show that this characterization still holds for a specific type of heterogeneous population. For the general case, we propose a numerical method which fully exploits the characterization of the mean-field equilibrium. We develop a case study related to the French market of Energy Saving Certificates based on the use of realistic data, which shows that the ranking system allows to achieve the sobriety target imposed by the regulation.
\end{abstract}

\begin{keywords}
Ranking games, Principal-Agent problem, Mean-field games, Energy savings
\end{keywords}

\begin{MSCcodes}
91A16, 49N80, 91B50
\end{MSCcodes}

\section{Introduction}

\subsection{Motivation and contributions}
Energy retailers may have incentives to generate energy consumption savings at the scale of their customer portfolio. For example, in France, since 2006, power retailers -- called \textit{Obligés} -- have a target of a certain amount of Energy Saving Certificates\footnote{\url{https://www.powernext.com/french-energy-saving-certificates}} to hold at a predetermined future date (usually 3 or 4 years). If they fail to obtain this number of certificates, then they face financial penalties. Certificates can be acquired either by certifying energy savings at the customer or by buying certificates on the market. 
If a retailer holds more certificates than its target at the end of the period, the surplus can be sold on the Energy Saving Certificates market. 
The pluriannual energy savings goal is determined by the regulator, and is function of the cumulative discounted amount of energy saved (thanks to thermal renovation for instance)\footnote{\url{https://www.ecologie.gouv.fr/dispositif-des-certificats-deconomies-denergie}}. Similar mechanisms -- called \textit{White certificates} -- have been implemented in several countries in Europe (Great Britain, Italy or Denmark). This type of mechanisms is not limited to Europe, for example \cite{rosenow2019market} mentions 46 Energy Efficiency Obligation mechanisms across the globe including 24 in the US, 14 in Europe, 4 in Australia.

There is evidence from behavioral economics that energy consumption reductions can be motivated by providing a financial reward and/or information on social norms or comparison to customers, see e.g. see~\cite{Hunt2014} or~\cite{Dolan2015}.
Especially, in~\cite{Dolan2015}, the authors find that social norms reduce consumption by around 6\% (0.2 standard deviations). Secondly, they obtain that large financial rewards for targeted consumption reductions work very well in reducing consumption, with a 8\% reduction (0.35 standard deviations) in energy consumption.
For recent years, electricity providers are aware of this lever to make energy savings, and contracts offering bonus/rewards in compensation of reduction efforts appear, see e.g. the offers of ``SimplyEnergy"\footnote{\url{https://www.simplyenergy.com.au/residential/energy-efficiency/reduce-and-reward}}, ``Octopus Energie"\footnote{\url{https://www.octopusenergy.fr/aide-faq/parrainage-cagnotte/fonctionnement-cagnotte-octopus}} or ``OhmConnect"\footnote{\url{https://www.ohmconnect.com/}}. The interest of this kind of solutions has been reinforced during the 2022 energy crisis  where many countries intend to diminish their global energy consumption\footnote{\url{https://www.politico.eu/article/eu-countries-save-energy-winter/}} because of gas and power shortage.

In light of the conclusions of~\cite{Dolan2015}, the present work is motivated by the ambition of constructing a \textit{new type of rewards} which are \textit{rank-dependent}. This new mechanism design of contracts  is based on the comparison between consumers in terms of their reduction efforts. On the mathematical side, our problem falls into the category of Stackelberg mean-field games with rank-based feature, in which a  principal (e.g.,~social planner, price-maker) is interested in the design of rank-based game rewards to optimize a certain 
criterion. This type of games has been quite rarely explored in the literature, being recently introduced in a series of papers, see e.g.~\cite{Bayraktar_2016, Bayraktar_2018, Bayraktar_2019}. From a mathematical perspective, to be able to conduct a rigorous study of our model on the market of Energy Savings, we develop several extensions of  the theoretical results  obtained in the latter papers. Finally, numerical algorithms for this type of games - especially in the case of heterogeneous populations, for which explicit characterization of the equilibrium is not available -  are completely missing in the literature, and we also aim at contributing in this direction.

\vspace{3mm}

We summarize below the main contributions of this paper:
\begin{itemize}
\item We design a \textit{new type of contract} which consists in offering a monetary reward to each consumer based on the rank of their consumption. In our context, the rank measures the reduction effort of a consumer compared with the rest of the population (a rank $r\in[0,1]$ indicates that the consumer is among the 100 $\times$ $r$ percent of the population with the highest consumption reduction). This new mechanism initiates a competition between similar consumers to be the best energy saver and unites the incentive potential of rankings with a financial reward. The problem writes as a \textit{Stackelberg mean-field game with rank-based interaction}.
\item We apply our model to the French market of Energy Saving Certificates using realistic data~(\Cref{sec::num_res}). Our main findings in this part are that this new type of contract based on a ranking system allows to achieve the sobriety target imposed by the regulation. In addition, we reach a simple form for the contract which can be easily described to customers. Both this simple form and the level of reward obtained, which is in line with existing offers, confirm that the contract we designed is realistic and can be readily implemented in practice.
\item From a mathematical point of view, we extend the results of~\cite{Bayraktar_2019} in several directions.  In particular, we provide new theoretical results in the case of a homogeneous population for reward functions which depend not only on the rank, as in~\cite{Bayraktar_2019}, but also on the cumulated consumption. We get semi-explicit representation of the equilibria and explicit characterization of the contract (\Cref{theorem:q_mu} and ~\Cref{prop::optimal_reward}). Furthermore, we are able to extend the previous results to the case of general convex cost functions and a specific type of heterogeneous population  (\Cref{prop::optimum_heterogeneous}). Finding such explicit expressions is rare in the literature, and is only possible by imposing a specific dynamics (as in~\cite{Elie_2019} and~\cite{Carmona_2021}).
Besides, we consider several extensions suitable to our context. First, we show that, for the class of reward functions considered here, the addition of common-noise in the consumption process only shifts the equilibrium distribution by a (random) constant. We also focus on time-dependent costs of effort for the agents, reflecting the collective awareness of agents on the energy reduction's necessity. We provide some invariance results, which show that the use of more sophisticated reward (a function that jointly depends on the rank and the consumption of the agent) is, at the equilibrium, equivalent to a reward that belongs to the class of purely rank-based rewards. 
\item For the realism of the application, we have also to consider the case of a general heterogeneous population, which cannot be solved explicitly anymore. We then introduce a numerical algorithm which exhibits a fast convergence. In the particular  setting of a homogeneous population,  we numerically show that it successfully finds the optimal reward obtained in explicit form.
\end{itemize}

\subsection{Related literature}

For a given reward function provided by the retailer, the competition between agents is modeled by a mean-field game. These games have been introduced simultaneously by Lasry and Lions~\cite{lasry2006jeux-I, lasry2006jeux-II,lasry2007mean} and Huang, Caines and Malhame~\cite{huang2006, huang2007}. They refer to the study of differential games involving a large number of indistinguishable agents which interact through their empirical distribution. By looking at the limit case where a continuum of agents is involved, each of them asymptotically negligible, mean-field games provide efficient ways to compute approximations of Nash equilibria for stochastic games with large number of players (games which are otherwise rarely tractable). Among various techniques, the problem is often solved by a fixed-point method involving both a Hamilton-Jacobi-Bellman equation -- characterizing the agents best response to a given  population distribution -- and a Fokker-Planck equation. Existence and uniqueness of a mean-field equilibrium are then analyzed through this system of coupled partial differential equations, see e.g.~\cite{Cardaliaguet_2015,Benamou_2017}.

The design of a reward/incentive by the retailer is then modeled as a \textit{Principal-Agent} problem, see e.g. the works of Sannikov~\cite{Sannikov_2008} and Capponi, Cvitani\'c and Yolcu~\cite{Capponi_2012} in continuous-time settings. In such problems, the Principal (retailer) aims at designing a monetary reward that is offered to the agent, depending on the quantity of work achieved by the latter. In energy management, A\"id, Possama\"i and Touzi~\cite{Aid_2022} introduces an incentive mechanism to control both the average fo the instantaneous consumption and the volatility of the agents instantaneous consumption. The additional difficulty in our context is the presence of a continuum of agents, and the interaction between them which is expressed in terms of a mean-field game. Such extensions of the Principal-Agent problem have been considered by Elie, Mastolia and Possama\"i~\cite{Elie_2019} and by \cite{elie2021mean} for demand response contracts in electricity markets -- where an explicit contract has been found for a specific class of dynamics (encompassing the linear-quadratic setting) -- and by Carmona and Wang~\cite{Carmona_2021} -- focusing on the linear-quadratic setting and finite-state spaces. Recent works proposed new formulations such as in~\cite{chiusolo2024new} or a model with a more complex hierarchy in~\cite{ hubert2023continuous}. Shrivats, Firoozi and Jaimungal~\cite{Shrivats_2021} introduce a Principal-Agent formulation to study the interaction between a regulator and a field of providers in the market of Renewable Energy Certificate (REC). 

The case of mean-field games with rank-based interactions have been first introduced in~\cite{Bayraktar_2016}, where  results of existence and uniqueness of the mean-field Nash equilibrium are provided for a general class of rewards. Extensions to principal-agent problem are then studied in~\cite{Bayraktar_2018,Bayraktar_2019}, deriving explicit expressions of optimal contract for several principal's objectives (profit/effort/rank-performance maximization/distribution target). These papers are the closest related to our paper, and our theoretical contributions with respect to them are presented above.

Finally, on the numerical side,  Campbell et al. introduce in~\cite{Campbell_2021} deep learning algorithms to solve principal-agent mean field games under heterogeneity of agent types. Here, we propose an alternative method, which takes advantage of the specific structure of the problem (explicit solution of the underlying mean-field game and common rank-based reward across the sub-populations) to lower the numerical complexity and derive efficient computational methods.

The rest of the paper is organized as follows: in~\Cref{sec::model}, we first define the model and characterize the equilibrium for the mean-field game between the agents. In~\Cref{sec::reward_optim}, we propose a numerical approach to solve the problem in the heterogeneous setting, for which the convex reformulation seems not extendable. In~\Cref{sec::num_res}, we apply the results to the French market of Energy Savings Certificates, and finally in~\Cref{sec::extensions}, we tackle some extensions that naturally arise in the context of the application.

\emph{The proofs of the main results are given in the
Appendix.}



\section{Model}\label{sec::model}

\subsection{Notation and assumptions}

In the sequel, we denote by $\mathcal{P}(\R)$ the set of distributions defined on $\R$ and by $\mathcal{P}^+(\R)$ the set of distributions having strictly positive density. Moreover, for any $\mu\in\mathcal{P}(\R)$, $F_\mu$ refers to the cumulative distribution function (cdf) of $\mu$, and when it exits, $f_\mu$ (resp. $q_\mu$) refers to the probability density function (pdf) (resp. the quantile function) of $\mu$. Moreover, we write $X\sim \mu$ when $X$ is distributed according to $\mu\in\mathcal{P}(\R)$. The normal distribution centered in $m$ with standard deviation $\sigma$ is denoted by $\mathcal{N}(m,\sigma)$ and its pdf is denoted by $x \mapsto \varphi(x;m,\sigma)$.
Let us successively introduce the different players involved in the Stackelberg game:
\begin{itemize}
    \item[$\diamond$] \textbf{Consumers.} We consider a \emph{heterogeneous} population of consumers, and we suppose that a clustering algorithm can be applied as a preprocessing step in order to split the population into $K$ sub-populations (or clusters), each of them composed of similar customers. Each cluster $k\in[K]:=\{1,\hdots,K\}$ represents a proportion $\rho_k$ of the overall population and corresponds to a given class of customers, categorized for example according to their usages, their heating system or the household composition. Here, we directly tackle mean-field interactions between the agents:
\begin{hypo}
    We assume that each sub-population is composed of an infinite number of indistinguishable agents, represented by a single consumer (\emph{representative} agent).
\end{hypo}

\textit{Energy consumption.}
In our setting, the incentive that initiates the competition between the agents is based on the \emph{cumulated} consumption of each representative agent over the time period $0$ to $T$. Therefore, we aim at representing by a stochastic process the dynamics of the forecast of the cumulated energy consumption, in contrast with papers where the instantaneous energy consumption is studied (see e.g.~A\"id, Possama\"i and Touzi~\cite{Aid_2022}).
To this purpose, let $(\Omega, \mathbb{F}, \mathbb{P})$ be a complete filtered probability space, which supports a family of $K$ independent Brownian motions $\{W_k\}_{1 \leq k \leq K}$.
For each $k$, the cumulated consumption $X_k^0(T)$ reads as follows:
\begin{align}
X_{k}^{0}(T)=\xnom_k+\int_0^T \sigma_k dW_{k}(s).
\end{align}
We define the \textit{forecast of the cumulated consumption} at time $t \in [0,T]$ by:
\begin{align}
X_{k}^{0}(t)=\mathbb{E}[X_{k}^{0}(T)|\mathcal{F}_t]=\xnom_k+\int_0^t \sigma_k dW_{k}(s).
\end{align}
We now introduce the set $\mathbb{A}$ of progressively measurable processes $a$ satisfying the integrability condition $\mathbb{E} \int_0^T |a(s)|ds<\infty$. 
For a given control $a_k \in \mathbb{A}$ (which can be viewed in our setting as the consumer's effort to reduce his electricity consumption), we introduce the \textit{forecast of the terminal under effort $a_k$} which is given at time $t \in [0,T]$ by:
\begin{equation}\label{eq::dynamics}
   X_k^a(t) = \xnom_k+\int_0^t \sigma_k dW_{k}(s) + \int_0^t a_k(s) ds.
\end{equation}
We can observe that here $\xnom_k$ represents the forecast at time $0$ of the cumulative consumption under zero effort and is called \textit{nominal} energy consumption.
In other words, the process $X_k^a$ (representing the forecast of the cumulated consumption under effort $a$) deviates from the process $X_k^0$ (representing the forecast of the cumulated consumption under a zero effort) according to the cumulative effort made since the beginning of the time period.

 We define by $\fnom_k$ the  p.d.f. of $X^{0}_{k}(T)$ : 
\begin{equation}\fnom_k(x) := \varphi\left(x\,;\,\xnom_k,\sigma_k\sqrt{T}\right)\enspace.\end{equation}

Under effort, we denote by $\mu_k$ the p.d.f. of $X^a_k(T)$ and the corresponding mean cumulated (over time) consumption over the agents of cluster $k$ by $$
m_{\mu_k} = \int_0^1 q_{\mu_k}(r)dr.
$$
For an equilibrium $(\mu_k)_{k\in[K]}$, the mean consumption of the \emph{overall} population is then $m_\mu = \sum_{k\in[K]}\rho_k m_{\mu_k}.$

Note that we do not explicitly impose bounds on the process $X_k$ -- typically non-negativity assumption -- but this will be naturally enforced by the cost of effort and the volatility parameter $\sigma_k$ so that the probability of negative consumption will be negligible. 

\item[$\diamond$]\textbf{Retailer.} In this model, an electricity provider, incentivised by a regulation agency, aims at designing a reward function based on the \emph{terminal ranking} of the agents in order to lower the global consumption of the customers: considering that the cumulated  consumption of the agents in the $k$th population, i.e. $X^a_k(T)$, is distributed according to $\mu_k$, the ranking $r$ of a player consuming the quantity $x$, is  measured by the fraction of agents consuming less than $x$, i.e., $r = F_\mu(x)$, where $F_\mu$ denotes the cumulative distribution function on $\mu$ (so that the worst performer/the highest consumption has rank one and the top performer has rank 0).

A reward function in our context is then a continuous real-valued function $\R \times [0,1] \ni (x,r) \mapsto R(x,r)$ that depends both on the cumulated consumption $x$ and the terminal ranking $r$. We consider only rewards that are non-increasing in both arguments, to favor low ranks. For any $\mu\in\mathcal{P}(\R)$, we write $R_\mu(x) = R(x,F_\mu(x))$ and when $R(x,r)$ is independent of $x$, we say that the reward is \emph{purely rank-based}. In the sequel, we will consider the following decomposition assumption:

\begin{hypo}\label{hypo::reward} Each sub-population $k\in[K]$ receives a reward $R_k$ has the form
\begin{equation}R_k\left(x, r\right) = B_k(r) - px\enspace,\end{equation}
where $p\in\R_+$ and $B_k \in \calB$ with $\calB$ the set of purely rank-based (decreasing) functions.
We then call $R$ the \emph{total reward} and its rank-dependent part $B_k$ the \emph{additional reward} (financial ``bonus" for the consumer).
\end{hypo}

In the energy context, the second member ``$-p x$" represents the classic invoice of the consumer, where $p$ is the price to consume one unit of energy (e.g. in \euro/kWh). Here, this simple pricing strategy can be viewed as a regulated price (as this is the case in France for example\footnote{``Tarif réglementé de vente" (TRV)}). The invoice is embedded in the reward function since it acts as a natural incentive to reduce the consumption. The first member $B_k$ is then the additional financial reward offered to consumers based on their terminal ranking.

In the modeling of energy consumption, a common-noise is often added (it can represent for example the outdoor temperature). However, we show that the insertion of such a noise only shifts the consumption distribution (by a random constant). This result was already mentioned for translation invariant functions (such as purely rank-based rewards), and we extend in~\Cref{sec::extensions} this property to the more general class of reward functions satisfying~\Cref{hypo::reward}.

\begin{hypo}[Fair reward mechanism]
\label{hypo::identical_reward}~
\begin{enumerate}[label=(\roman*)]
\item Each cluster is \emph{independent}: the rank of an agent of cluster $k\in[K]$ is only determined by the distribution of the cluster $k$.
\item The same \emph{unitary} bonus is proposed to each cluster, i.e., $B_k(r) = \xnom_k \beta(r)$ for all $k\in[K]$.
\end{enumerate}
\end{hypo}
Assumption~\ref{hypo::identical_reward} imposes that the sub-populations evolve \emph{separately}, but are linked through a common reward function. This assumption is taken for the sake of a fair reward mechanism: on one hand, consumers only compete with similar agents, i.e., with agents having the same characteristics (type of heating,  household composition, ...) and on the other hand, the shape of the reward should be identical for each the sub-population to prevent from favoring one cluster compared to another. The function $\beta$ is then the unitary bonus received by every customer (in \euro/kWh). This specific structure of rewards  -- linking the populations through a common unitary bonus -- aims to take into account that a larger consumer may need a higher reward to initiate a change of behavior, an effort.

\item[$\diamond$] \textbf{Regulator's incentive.} 
We denote by $\kappa : \R \to \R$ the mean selling cost function (including the regulator's  incentive) that depends on the mean consumption $m_\mu$ of the overall population. The cost function is usually defined as a function of the total consumption, but here we directly introduce proportions $\rho_k$  for the sub-populations instead of the absolute number of customers. The retailer has to then decide between the income generated by the sale of electricity ($pm_\mu$) and its cost ($k(m_\mu)$), see~\Cref{sec::fixed_price} for more details.

\begin{hypo}\label{hypo::saving_function} The function $\kappa:\R \to \R$ is increasing, convex and differentiable. Moreover, denoting by $\xpi_k$ the consumption under
the incentive $R_k(x) = -px$,  $\kappa'(0)< p < \kappa'\left(\sum_{k\in[K]}\xpi_k\right)$.
\end{hypo}
\Cref{hypo::saving_function} is natural in the context of our application. In practice, the selling cost function is defined as $\kappa: m \mapsto c_p(m) + s (m)\enspace,$ where 
\begin{itemize}\setlength\itemsep{-0.2em}
    \item[$\diamond$] $s(\cdot)$ denotes the \emph{valuation function} for the retailer, i.e., the penalty imposed by the regulator to favor a reduction in consumption,
    \item[$\diamond$] $c_p(\cdot)$ denotes the cost function, induced by the supply of energy (or production of energy if the retailer is also the producer).
\end{itemize}
We assume here that the marginal cost $\kappa'(\cdot)$ is lower than the marginal price $p$ at $0$ -- meaning that it is always profitable to sell a positive quantity of energy -- and conversely we assume that the marginal cost $\kappa'(\cdot)$ is greater than the marginal price $p$ at $\xpi$ -- meaning that it is not profitable to sell more electricity with the additional reward than without (the regulator penalizes an overconsumption).
The convexity of the $\kappa$ is a strong assumption, but usually assumed for energy markets:
\begin{itemize}
    \item[$\diamond$] The penalty function $s$ is increasing and convex, since the regulator aims at encouraging consumption reduction by strongly penalizing huge consumption levels. The cause of this function will be described in more details in the numerical section for the energy savings case; see~\Cref{fig::penalty}. 
    \item[$\diamond$] Moreover, the retailer's aggregated cost function is often considered as increasing and convex, due to a decreasing return to scale, see e.g.~\cite{Alekseeva_2019,Alasseur2020}: the mechanism of day-ahead markets favors the “cheapest” (lowest marginal cost) power plants as the cheapest resource will participate to the electricity generation first, followed by the second cheapest option, and so on, until the demand is satisfied. In the case of non-convex aggregated cost, the convex hull of the aggregated cost function is often considered, see e.g.~\cite{Schiro_2016}.
\end{itemize}
\end{itemize}

\begin{figure}[!ht]
    \centering
    \includegraphics[width=0.70\linewidth,clip=true,trim=0cm 2cm 0cm .3cm]{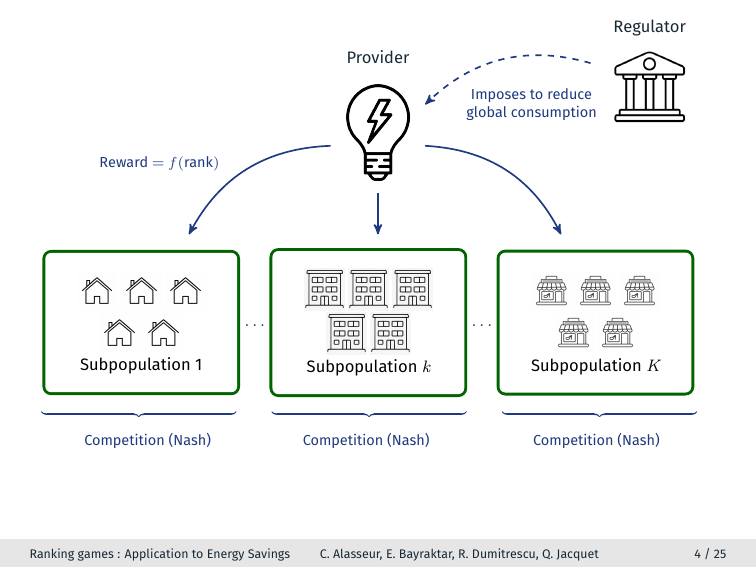}
    \caption{Relation between the Principal (provider) and the sub-populations composed of an infinite number of Agents (consumers).}
    \label{fig:schema_ranking}
\end{figure}

\Cref{fig:schema_ranking} outlines the Principal-Agent relation between the retailer and the field of consumers. We then first focus on the competition among the agents before studying the principal problem.

\subsection{Mean-field game between agents}
\emph{
In all this section, let us focus on a given cluster $k\in[K]$, as there is no interaction between clusters. We suppose here that the reward $R_k(x,r)$ is given. 
}

\paragraph{Utility function}
An agent of cluster $k$ is able to produce an effort $a_k$ to reduce its consumption, but has to pay as a counter-part the quadratic cost $c_k\, a^2_k(t)$ with $c_k>0$ a given positive constant. The convexity of the effort cost is natural in the context of our application. In particular, this cost either corresponds to the purchase of new equipment that is more efficient than the older one (new heating installation, isolation, ...) or corresponds to a change in the consumption pattern (sobriety). In the latter case, the convexity illustrates that small efforts (as for e.g.  switching off the
light when leaving a room) are easy to make  while large consumption reduction (as for e.g.  reducing heating or air conditioning ) are more demanding. It is also possible to consider a more general convex cost, which is in non-quadratic form, since it would still lead to a tractable agent problem. However, quadratic costs are often considered in order to obtain explicit expression of the optimum, see e.g.~\cite{Aid_2022} in the electricity context. In exchange of the effort, the consumer receives the reward $R_k(x,r)$, depending on his rank $r=F_{\mu_k}(x)$ within the sub-population, where $\mu_k$ is the $k$-sub-population distribution. His objective is then: 
\begin{equation}\label{eq::consumer_problem}\tag{\mbox{$P^{\text{cons}}$}}
    V_k(R_k,\mu_k) := \sup_{a \in \mathbb{A}} \mathbb{E} \left[ R_{k,\mu_k}(X^a_{k}(T)) - \int_0^T c_k a^2_k(t) dt \right]\enspace.
\end{equation}
The quantity $V_k(R_k,\mu_k)$ represents the \emph{optimal expected utility} of an agent of class $k$, for a given provider's reward and population distribution.

\paragraph{Previous results} We present below some results which will be used throughout the paper. The first result gives the explicit solution of the agent's best response to a population distribution $\tilde{\mu}_k$:
\begin{prop}[Characterization of the best response]\label{prop::best_response}
Given a bounded total reward function $R_k$ satisfying~\Cref{hypo::reward} and $\tilde{\mu}_k\in\mathcal{P}(\R)$,  let
\begin{equation}
\gamma_k(\tilde{\mu}) = \int_{\R} \fnom_k(x) \exp\left(\frac{R_{k,\tilde{\mu}}(x)}{2c_k\sigma_k^2}\right)dx\quad (<\infty)\enspace.
\end{equation}
Then,
the optimal terminal distribution $\mu_k^*$ of a player from cluster $k$ admits a p.d.f. defined as
\begin{equation}\label{eq::Phi_operator}f_{\mu_k^*}(x) = \frac{1}{\gamma(\tilde{\mu}_k)}\fnom_k(x)\exp\left(\frac{R_{k,\tilde{\mu}_k}(x)}{2c_k\sigma_k^2}\right)\enspace,\end{equation}
and the optimal value is then
$V_k(R_k,\tilde{\mu}_k) = 2c_k\sigma_k^2 \ln \gamma_k(\tilde{\mu}_k)\enspace.
$
\end{prop}
The above result corresponds to \cite[Proposition 2.1]{Bayraktar_2019} and is obtained using the Schrödinger bridge approach, see~\cite{Chen_2015} for connections with optimal transport theory. The consumption process $X_k$ under the optimal effort  then satisfies the equation 
$$dX_k(t) = a_k(t,X_k(t);\mu^*_k)dt + \sigma_kdW_k(t),$$
where the optimal effort $a_k(\cdot,\cdot;\mu^*_k)$ is defined as
\begin{equation}\label{eq::optimal_effort}
    a_k(t,x,\mu) = \sigma_k^2 \partial_x \ln u_k(t,x,\mu)
\end{equation}
with $$
u_k(t,x,\mu) = \Esp\left[\exp\left(\frac{1}{2c_k\sigma_k^2}R_{k,\mu}(x+\sigma_k\sqrt{T-t}Z\right)\right],\;\; Z\sim \mathcal{N}(0,1)\enspace.
$$
We now introduce the notion of mean-field Nash equilibrium.
\begin{definition}[Mean-field Nash equilibrium]\label{def::Phi}
We say that $\mu_k\in\mathcal{P}(\mathbb{R})$ is an \emph{equilibrium} (terminal distribution) if it is a fixed-point of the mapping $\Phi_k\;: \;\tilde{\mu}_k \mapsto \mu_k^*$, with $\mu_k^*$ given by the solution of the equation \eqref{eq::Phi_operator}.
\end{definition}
The existence of such an equilibrium has been proved in the general setting using Schauder’s fixed point theorem (see~\cite{Bayraktar_2016}). We give below a characterization of this equilibrium distribution, as well as an explicit expression for purely rank-based rewards:
\begin{prop}[Characterization of the equilibrium distribution]\label{lemma:condition_quantile}
Given a bounded total reward function $R_k:\R\times [0,1]\to\R$, the distribution $\mu_k\in\mathcal{P}(\R)$ is an equilibrium terminal distribution for cluster $k$ if and only if its quantile function $q_{\mu_k}$ satisfies 
\begin{equation}\label{eq::charac_eq}
N\left(\frac{q_{\mu_k}(r)-\xnom_k}{\sigma_k\sqrt{T}}\right) = \frac{\int_0^r \exp\left(-\frac{R_{k,\mu_k}(q_{\mu_k}(z))}{2c_k\sigma_k^2}\right) dz}{\int_0^1 \exp\left(-\frac{R_{k,\mu_k}(q_{\mu_k}(z))}{2c_k\sigma_k^2}\right) dz}\enspace,
\end{equation}
where $N$ is the standard normal c.d.f.
In the specific case of a purely rank-based reward, we obtain that the equilibrium $\nu_k$ is unique and the quantile is given by
\begin{equation}\label{eq:q_nu}
q_{\nu_k}(r) = \xnom_k + \sigma_k \sqrt{T} N^{-1}\left(\frac{\int_0^r \exp\left(-\frac{B_k(z)}{2c_k\sigma_k^2}\right) dz}{\int_0^1 \exp\left(-\frac{B_k(z)}{2c_k\sigma_k^2}\right) dz}\right)\enspace.
\end{equation}
\end{prop}

\paragraph{New results} The above result is provided in~\cite[Theorem~3.2]{Bayraktar_2019}, and below we extend the explicit characterization to the more general case of reward maps $R$, which not only depend on the rank, but also have a linear dependence on $x$. 
\begin{theorem}[Explicit characterization for non purely rank-based rewards]\label{theorem:q_mu}
Suppose the reward is of the form defined in~Assumption~\ref{hypo::reward}. Then, the equilibrium $\mu_k$ is unique, and it satisfies
\begin{equation}\label{eq:q_mu}
q_{\mu_k}(r) = q_{\nu_k}(r) - \frac{pT}{2c_k}\enspace,
\end{equation}
where $\nu_k$ is the (unique) equilibrium distribution for the specific case $p = 0$ (purely rank-based reward), defined in~\eqref{eq:q_nu}.
\end{theorem}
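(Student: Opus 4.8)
The plan is to work at the level of the fixed-point (Schrödinger-bridge) characterization of the equilibrium from \eqref{eq::Phi_operator}, rather than the quantile characterization, since the linear term $-px$ interacts most transparently with the Gaussian nominal density there. Recall that $\mu_k$ is an equilibrium iff its density is a fixed point
$$f_{\mu_k}(x) = \frac{1}{\beta_k(\mu_k)}\fnom_k(x)\exp\!\left(\frac{R_{\mu_k}(x)}{2c_k\sigma_k^2}\right),$$
and that under \Cref{hypo::reward} we have $R_{\mu_k}(x) = B(F_{\mu_k}(x)) - px$, so the exponent splits into a genuinely rank-dependent part $B(F_{\mu_k}(x))$ and the linear part $-px$.

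First I would isolate the linear term. Writing $\fnom_k(x) = \varphi(x;\xnom_k,\sigma_k\sqrt T)$ explicitly and multiplying by $\exp(-px/(2c_k\sigma_k^2))$, the combined exponent is a quadratic in $x$; completing the square yields
$$\fnom_k(x)\exp\!\left(\frac{-px}{2c_k\sigma_k^2}\right) = C\,\varphi\!\left(x;\ \xnom_k - \tfrac{pT}{2c_k},\ \sigma_k\sqrt T\right),$$
where $C>0$ is a constant independent of $x$ (the leftover from completing the square, together with the common Gaussian normalization). Thus the sole effect of the linear reward $-px$ is to shift the nominal mean from $\xnom_k$ to $\tilde x_k := \xnom_k - pT/(2c_k)$, leaving the rank-dependent factor $\exp(B(F_{\mu_k}(x))/(2c_k\sigma_k^2))$ untouched.

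Next I would substitute this back and absorb $C$ into the normalizing constant: $\mu_k$ solves the fixed-point equation for the reward $B - px$ with nominal mean $\xnom_k$ if and only if it solves the fixed-point equation for the \emph{purely ranked} reward $B$ with nominal mean $\tilde x_k$. The two equations are literally identical for every candidate $\mu_k$, so the two equilibrium sets coincide. Since the purely ranked-based case has a unique equilibrium (\Cref{lemma:condition_quantile}), uniqueness of $\mu_k$ follows at once, and applying the purely-ranked quantile formula \eqref{eq:q_nu} with $\xnom_k$ replaced by $\tilde x_k$ gives $q_{\mu_k}(r) = \tilde x_k + \sigma_k\sqrt T\,N^{-1}(\cdots) = q_{\nu_k}(r) - pT/(2c_k)$, which is exactly \eqref{eq:q_mu}.

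I expect the only computation requiring care to be the completing-the-square identity and the bookkeeping of $C$; the conceptual crux — and the main thing to get right — is the observation that a reward linear in $x$ is equivalent, at the level of the equilibrium equation, to a translation of the nominal Gaussian, which immediately reduces the heterogeneous-reward problem to the already-solved purely ranked one. A secondary point to record is that any fixed point automatically has a continuous, strictly positive density, so $F_{\mu_k}(q_{\mu_k}(z)) = z$ and the quantile inversion invoked above is legitimate.
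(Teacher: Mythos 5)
Your proof is correct, but it pivots on a different characterization than the paper's own argument. The paper works with the quantile characterization of \Cref{lemma:condition_quantile}: it introduces an auxiliary shift $\gamma$, computes $\int_0^r \exp\bigl(-\tfrac{B(z)-p(q_\nu(z)+\gamma)}{2c\sigma^2}\bigr)dz$, applies the Gaussian tilting identity \eqref{eq::shift}, fixes $\gamma=-pT/(2c)$ so that the shifted quantile verifies the equilibrium condition, then runs the converse direction and finally gets uniqueness by a separate contradiction argument (two equilibria for $p\neq 0$ would yield two for $p=0$). You instead work at the level of the density fixed point \eqref{eq::Phi_operator} and absorb the factor $\exp(-px/(2c_k\sigma_k^2))$ into the nominal density, turning the problem with reward $B-px$ and nominal mean $\xnom_k$ into the purely ranked problem with reward $B$ and nominal mean $\xnom_k - pT/(2c_k)$. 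Both routes rest on exactly the same exponential-tilting identity \eqref{eq::shift}, but yours buys something: the two fixed-point equations are \emph{identical}, so existence, uniqueness, and the quantile formula \eqref{eq:q_nu} all transfer in one stroke from the purely ranked theory, with no separate converse or uniqueness step. Two small points to make explicit when writing it up: (a) you are invoking \Cref{lemma:condition_quantile} for a population whose nominal mean is $\tilde x_k$ rather than $\xnom_k$ — legitimate, since that result holds for an arbitrary nominal Gaussian, but it should be said; (b) the normalizing constants match automatically ($\beta_k = C\tilde\beta_k$ with your constant $C$), which is what makes the two equations literally the same rather than merely proportional.
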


\Cref{theorem:q_mu} shows that the addition of a linear part in the consumption acts as a shift on the probability density function.
We emphasize that our uniqueness result of the equilibrium $\mu$ generalizes the one established in \cite{Bayraktar_2019}, the latter being obtained under the additional assumptions that the map $r\mapsto R_k(x,r)$ is convex and $r\mapsto \partial_x R_k(x,r)$ is non decreasing. Instead, we assume a linear dependence on the consumption for the reward, but no convexity requirement is made on its purely rank-based component $B$.

\begin{corollary}[Equilibrium without additional reward]\label{corol::pi}
For $R_k(x,r)=-px$, the equilibrium follows the normal distribution $\mathcal{N}\left(\xpi_k,\sigma_k\sqrt{T}\right)$, where $\xpi_k = \xnom_k - \frac{pT}{2c_k}$ is the consumption under the natural incentive associated with the price $p$. Moreover, the optimal consumer's utility is
\begin{equation}\label{eq::value_consumer_noB}
    V_k(R,\mu_k) = \Vpi_{k} := -p \xpi_k - \frac{p^2T}{4c_k}\enspace.
\end{equation}
\end{corollary}
\begin{proof}
    For  $B_k\equiv 0$, Eq.~\eqref{eq:q_nu} gives us $q_{\nu_k}(r) = \xnom_k + \sigma_k \sqrt{T} N^{-1}(r)$, therefore $\nu_k\sim \mathcal{N}(\xnom,\sigma_k\sqrt{T})$. We then obtain by~\Cref{theorem:q_mu} the definition of the equilibrium $\mu_k$. Finally, using~\Cref{prop::convo}, we get
    $$
    2c_k\sigma_k^2 \ln \gamma_k(\tilde{\mu}_k) = \ln\left(\int_{\R} \fnom_k(x) \exp\left(\tfrac{-px}{2c_k\sigma^2_k}\right)dx\right)
    =- p\xnom_k + \frac{p^2T}{4c_k} \enspace .
    $$
\end{proof}

\Cref{corol::pi} shows that the price of electricity constitutes a natural incentive, as the consumer already makes an effort to reduce his consumption from $\xnom$ to $\xpi$. However, it induces a disutility for consumers ($\Vpi\leq 0$). An increase of the price would lead to a supplementary consumption reduction but would decrease further the utility of the agents, and is therefore a non-desirable energy saving strategy.

\subsection{The Principal's problem}\label{sec::fixed_price}
\emph{In this section, we suppose that~\Cref{hypo::reward} is satisfied. Therefore, the equilibrium distribution is unique and is defined by~\eqref{eq:q_mu}.}

For a given $k$, we denote by $\epsilon_k$ the mapping which associates to the total reward function the corresponding equilibrium distribution, i.e. $\epsilon_k(R_k)=\mu_k,$ where $\mu_k$ satisfies~\eqref{eq:q_mu}.
The problem of the retailer can then be written as
\begin{equation}\label{eq::master_problem}\tag{\mbox{$P^{\text{ret}}$}}
\pi^*:=\max_{\beta\in \calB}  \left\{p m_{\mu} - \kappa(m_\mu)- \sum_{k\in[K]}\rho_k\xnom_k\int_{0}^{1} \beta(r)dr\; \left| \; \begin{aligned}
    &R_k(x,r) = \xnom_k \beta(r) - px\\
    &\mu_k = \epsilon_k(R_k)\\
    &V_k(R_k,\mu_k) \geq \Vpi_k +\tau \xnom_k
\end{aligned} \right.\right\}
\end{equation}
where $\kappa(\cdot)$ denotes the mean selling cost function
and $m_\mu$ is the mean consumption at the equilibrium $\mu$. The optimal objective $\pi^*$ then corresponds to the profit per agent (mean over the population) made on the interval $[0,T]$ (in \euro).
The inequality constraint on the utility ensures that consumers ``play the game", as it procures a strictly better utility than without additional reward.  Classically, $\tau = 0$, meaning that the effort achieved by consumers in order to save energy is compensated (in mean) by the reward offered by the retailer. Observe that with $\tau=0$, some agents may have a negative reward, which is not always desirable. Therefore, for practical issue and acceptability, we allow for a positive $\tau$ to take into account switching costs that appear when it comes to subscribing to a reward mechanism, see e.g.~\cite{Magnani_2023}. Here, we choose the additional term ``$+\tau\xnom$" to be proportional to the nominal consumption, supposing that the implicit switching costs that characterize the consumers are proportional to their baseline consumption, i.e., a huge (resp. small) consumer will change if the difference with its baseline option is huge (resp. small).

In the case of a homogeneous population  and \textit{linear dependence of the objective function with respect to the equilibrium distribution}, the results are obtained in \cite{Bayraktar_2019}. We extend them here to the more general case of \textit{convex nonlinear dependencies}.

\subsubsection{Homogeneous population}
We consider in this section the specific case where there is a unique cluster of customers (homogeneous population). Therefore, we omit the dependence in $k$.
Using~\Cref{prop::map_reward} (given in the Appendix), Problem~\eqref{eq::master_problem} can be reformulated as a constrained minimization problem on the distribution space:
\begin{prop}Let us consider the following minimization problem
\begin{equation}\label{eq:reformulated_problem}
\begin{aligned}
\min_{\mu\in \mathcal{P}^+(\R)} &\quad \kappa\left(\int_{\R} y f_\mu(y) dy\right) + 2c\sigma^2\int_{\R} \ln\left(\frac{f_\mu(y)}{\fnom(y)}\right)f_\mu(y)dy\\
\text{s.t.} &\quad  \int_{\R} f_\mu(y) dy = 1\\
&\quad y\mapsto\ln\left(\frac{f_\mu(y)}{\fnom(y)}\right) + \frac{p}{2c\sigma^2}y \text{\quad decreasing}
\end{aligned}\enspace.
\end{equation}
Then, the reward $B_{\mu^*}\in\calB$, constructed from an optimal distribution $\mu^*\in\mathcal{P}^+(\R)$ of~\eqref{eq:reformulated_problem} as 
\begin{equation}\label{eq::optimal_Bstar}
    B_{\mu^*}(r) = \Vpi+\tau \xnom + 2c\sigma^2 \ln\left(\frac{f_{\mu^*}(q_{\mu^*}(r))}{\fnom(q_{\mu^*}(r))}\right)+ p q_{\mu^*}(r)
\end{equation}
is optimal for problem~\eqref{eq::master_problem}.
\end{prop}
\begin{proof}
    From~\Cref{prop::map_reward}, $B_{\mu}$ defined in~\eqref{eq::optimal_Bstar} is the reward that achieves a given equilibrium distribution $\mu$ with the lowest cost while satisfying the utility condition in~\eqref{eq::master_problem} (since $V(R,\mu) = (1+\tau)\Vpi$ for any attainable equilibrium $\mu$ and $R(x,r) = B_{\mu}(r) - px$).
    The objective function is then rewritten as a function of the pdf $f_\mu$ using the expression of the reward.
\end{proof}

We now relax~\eqref{eq:reformulated_problem} by ignoring the decreasingness of the additional reward in~\eqref{eq:reformulated_problem}:
\begin{equation}\label{eq:relaxed_problem}\tag{\mbox{$\widetilde{P}^\text{ret}$}}
\min_{f:\R\to\R_+} \left\{ \left.\kappa\left(\int_{\R}y f(y) dy\right) + 2c\sigma^2\int_{\R} \ln\left(\frac{f(y)}{\fnom(y)}\right)f_\mu(y)dy \;\right\vert  \int_{\R} f(y) dy = 1
\right\}\enspace.
\end{equation}
The discussion about the relation between the initial problem \eqref{eq:reformulated_problem}  and the relaxed one \eqref{eq:relaxed_problem} is provided further. The optimal solution of this  relaxed problem is then characterized by the following lemma:

\begin{lemma}[Characterization of the optimal distribution for the relaxed problem]\label{lemma::optimal_reward}Let ~\Cref{hypo::saving_function} holds. Then, \eqref{eq:relaxed_problem} defines a convex problem. Moreover, if $\mu^*$ admits a density $f_{\mu^*}$ which minimizes~\eqref{eq:relaxed_problem}, then it satisfies the following optimality conditions: for $\mu^*$-almost every $x$ in $\R$,

\begin{align}\label{eq::fixed_point_fmu}
 f_{\mu^*}(x) = \frac{1}{\alpha({\mu^*})} \fnom(x) \exp\left(-x \frac{\kappa'(m_{\mu^*})}{2c\sigma^2}\right)
\end{align} 
 where
$$\alpha(\mu) = \int_{\R} \fnom(y) \exp\left(-y \frac{\kappa'(m_\mu)}{2c\sigma^2}\right)dy\enspace.$$
Conversely, any distribution with density function that satisfies~\eqref{eq::fixed_point_fmu} is optimal for~\eqref{eq:relaxed_problem}.
\end{lemma}
\begin{proof}
The convexity of the objective functional with respect to $f$ comes from the convexity of $\kappa$ (see~\Cref{hypo::saving_function}) and the convexity of $x\mapsto x\ln(x)$. The first-order conditions for~\eqref{eq:relaxed_problem}  are detailed in~\Cref{app::optimal_reward}. Furthermore, they are sufficient for this convex problem; see, e.g.~\cite[Theorem 3.3]{Lanzetti_2022}. 
\end{proof}
In contrast to \cite{Bayraktar_2019}, the optimal distribution is no longer explicit due to the general function $\kappa(\cdot)$. Instead, the optimal distribution is implicitly known through the fixed-point equation~\eqref{eq::fixed_point_fmu}. We simplify this condition in the following theorem to end up with a one-dimensional fixed-point equation on the mean consumption.

\begin{lemma}(Characterization via a fixed-point equation)\label{prop::optimal_reward}
Let~\Cref{hypo::saving_function} holds, and let $\delta:\mathbb{R} \to \mathbb{R}$ be a function given by  $$\delta(m) = p - \kappa'(m)\enspace.$$ Then, if $\mu^*$ admits a density function, which minimizes~\eqref{eq:relaxed_problem}, then it satisfies the following optimality condition: $\mu^*$ is a Gaussian process of mean $m^*$ and standard deviation $\sigma\sqrt{T}$,
where $m^*$ satisfies the fixed-point equation 
\begin{equation}\label{eq::fixed_point_m}
    m - \xpi = \frac{T}{2c}\delta(m)\enspace.
\end{equation}
Conversely, a distribution following a normal distribution $\mathcal{N}(m^*,\sigma\sqrt{T})$ is optimal for~\eqref{eq:relaxed_problem}.
\end{lemma}

\begin{theorem}\label{corol::decreasing}
Let~\Cref{hypo::saving_function} holds. Then, the fixed-point equation~\eqref{eq::fixed_point_m} admits a unique solution $m^*\in]0,\xpi]$. As a consequence, $\eqref{eq:relaxed_problem}$ admits an unique optimal solution.
    Moreover, the associated reward function $B_{\mu^*}$, defined as
\begin{equation}\label{eq::optimal_reward}B_{\mu^*}(r) = \tau \xnom + \frac{c}{T}\left[(\xpi)^2 - (m^*)^2\right] + q_{\mu^*}(r) \delta(m^*) \enspace,\end{equation}
is decreasing.
The associated retailer gain is then
\begin{equation}\pi^* = m^* \kappa'(m^*) - \kappa(m^*) + \left(\frac{m^* + \xpi}{2}\right)\delta(m^*)- \tau \xnom\enspace.\end{equation}
\end{theorem}
\begin{proof}
The increasingness of $\kappa'(\cdot)$ suffices to ensure  that~\eqref{eq::fixed_point_m} admits a unique solution. The existence and uniqueness of a solution for~\eqref{eq::fixed_point_m} entail the existence and uniqueness of an optimal reward for~\eqref{eq:relaxed_problem}; see~\eqref{prop::optimal_reward}. Moreover, we know from~\Cref{hypo::saving_function} that $\delta(0)\geq 0 \geq \delta(\xpi)$. Thus, the root $m^*$ of equation $\tfrac{T}{2c}\delta(m)-m+\xpi=0$ must belong to $]0,\xpi]$. As a consequence, $\tfrac{T}{2c}\delta(m^*) = m^* - \xpi\leq 0$ and the reward function $B_{\mu^*}$ is decreasing.
\end{proof}
The knowledge of the bounds for $m^*$ along with the decreasingness of $\delta(\cdot)$ allows to use, for instance, a binary search algorithm to numerically find the optimal mean consumption in logarithmic time.
\begin{remark}For quadratic function $s:m\mapsto \alpha^s_2 m^2 + \alpha^s_1 m + \alpha^s_0$ and affine cost function $c_p: m\mapsto \alpha^c_1 m + \alpha^c_0$, the fixed point of~\eqref{eq::fixed_point_m} is analytically known:
    $$m^* = \left({1+\tfrac{\alpha^s_2 T}{c}}\right)^{-1}\left(\xnom - \tfrac{(\alpha^s_1 + \alpha^c_1) T}{2c}\right)\enspace.$$
\end{remark}
The function $\delta(\cdot)$ is here interpreted as the \emph{reduction desire} of the provider, since the consumption reduction $\xpi - m^*$ is proportional to $|\delta(m^*)|$, see~\eqref{eq::fixed_point_m}. It expresses the marginal benefit that comes from selling electricity (including the penalty function $s$ provided by the regulator).

In the relaxed problem, we neglect that the reward is decreasing. However, this is directly ensured by~\Cref{corol::decreasing}: the reward provided in~\Cref{prop::optimal_reward} decreases if and only if $\delta$ is negative at the optimum. Therefore, it is also optimal for the original retailer problem~\eqref{eq::master_problem}.

The optimal reward obtained in Eq.~\eqref{eq::optimal_reward} is defined through the quantile of $\mu^*$ and is therefore unbounded.
From the application viewpoint (it is not realistic to give unbounded rewards to consumers) and for numerical issues, we now look at truncated reward. For this purpose, let us define for any $M>0$ the truncated optimal equilibrium distribution $\mu_M$ through its p.d.f:
\begin{equation}\label{eq::fM}
    f_{\mu_M}(x) \propto h_M(x) :=\fnom(x) \exp\left(\frac{-x\kappa'(m_{\mu^*}) \wedge M \vee (-M)}{2c\sigma^2}\right) \enspace.
\end{equation}
In~\eqref{eq::fM}, $f_{\mu_M}$ is equal to $h_M$ up to a multiplicative constant so that $\int_{\R} f_{\mu_M} = 1 $. 
\begin{theorem}[Bounded reward]\label{prop::bounded_reward}
    The total reward which leads to equilibrium $\mu_M$ and gives to agents the utility $\Vpi+\tau \xnom$ is bounded for every consumption level and is defined as 
    \begin{equation}\label{eq::bounded_reward}
    \forall x\in\R,\; R(x,F_{\mu_M}(x)) = \Vpi + \tau \xnom -2c\sigma^2\ln \int_{\R} h_M(y)dy + x \kappa'(m_{\mu^*})\wedge M \vee (-M)\enspace.
    \end{equation}
    Moreover, the mean consumption converges to the optimal one : $$m_{\mu_M} = m_{\mu^*} + O\left(e^{-\frac{M}{2c\sigma^2}}\right)\enspace.
    $$
\end{theorem}
\begin{proof} From~\Cref{prop::map_reward}, the total reward associated to $\mu_M$ is $R_{\mu_M}=\Vpi+\tau \xnom + 2c\sigma^2\ln\left(f_{\mu_M}(y)/\fnom(y)\right)$ and satisfies the utility constraint by construction. The result is then obtained using the definition of $f_{\mu_M}$. 
Besides, one can show (see~\cite[Theorem 5.4]{Bayraktar_2019}) that $\int_{\R} h_M dx = \alpha(\mu^*) + O\left(e^{-\frac{M}{2c\sigma^2}}\right)$ and $\int_{\R} x h_M(x) dx = \alpha(\mu^*) m_{\mu^*} + O\left(e^{-\frac{M}{2c\sigma^2}}\right)$. As a consequence, $m_{\mu_M} = m_{\mu^*} + O\left(e^{-\frac{M}{2c\sigma^2}}\right)$.
\end{proof}
As the optimal (unbounded) total reward, its truncated analogue obtained in~\eqref{eq::bounded_reward} is -- at the equilibrium -- linear in the cumulated consumption over the period (inside the bounds $[-M,M]$).  This means that the consumers are rewarded proportionally to their consumption reduction. Moreover, for both the theoretical bonus~\eqref{eq::optimal_reward} and the bounded one~\eqref{eq::bounded_reward}, $\tau$ only acts as a shift on the function in order to uplift or lower the bonus received by each agent. Consequently, it is possible to a posteriori choose $\tau$ in such a way that the bonus of a given ranking corresponds to a certain amount.

\subsubsection{Heterogeneous population}\label{subsec::heterogeneous_pop}
We consider here the more general setting of a heterogeneous population, not studied yet in the ranking games literature, which consists in a finite number of clusters $K>1$. The transformation which leads to~\eqref{eq:reformulated_problem} still applies, but the additional constraint in~\Cref{hypo::identical_reward},(ii) has to be imposed to ensure the unitary reward is identical for every sub-population\footnote{
Using~\Cref{prop::map_reward}, there exists a common unitary reward leading to equilibrium $\mu_1,\hdots,\mu_K$ if and only if  there exists 
for all $k\in[K]$ a constant $C_k$ such that $\frac{c_k\sigma_1^2}{\xnom_k}\ln \left(\frac{f_{\mu_k}(x)}{\fnom_k(x)}\right) = \frac{c_1\sigma_1^2}{\xnom_1}\ln \left(\frac{f_{\mu_1}(x)}{\fnom_1(x)}\right) + C_k$ for all $x\in\R$.
}.

As it will be seen below, we can recover explicitly solvable problems for a subclass of heterogeneous populations for which all agents of the overall population are similar up to a scaling factor.

\begin{prop}[Explicit characterization for a sub-class of heterogeneous population]\label{prop::optimum_heterogeneous}
Let suppose that the following statement holds:
\begin{equation}\label{eq::condition_scaling}
\forall k\in[K],\quad \frac{\xnom_{k}}{\xnom_{1}} = \frac{\sigma_{k}}{\sigma_{1}} = \frac{c_{1}}{c_{k}} \quad(:=\theta_k)\enspace.
\end{equation}
Then, any $\mu_1,\hdots,\mu_K$ equilibrium distributions associated to a common unitary reward $\beta$ solution of~\eqref{eq::master_problem} satisfies $f_{\mu_k}(y) = \tfrac{1}{\theta_k}f_{\mu_1}\left(\tfrac{y}{\theta_k}\right)$ for all $k\in[K]$. Moreover, the retailer's profit problem simplifies to 
\begin{equation}\label{eq::master_problem_scaling}
\pi^*:=\bar{\theta}\max_{\beta\in \calB}  \left\{p m_{\mu_1} - \tilde{\kappa}(m_{\mu_1})- \xnom_1\int_{0}^{1} \beta(r)dr\; \left| \; \begin{aligned}
    &R_1(x,r) = \xnom_1 \beta(r) - px\\
    &\mu_1 = \epsilon_1(R_1)\\
    &V_1(R_1,\mu_1) \geq \Vpi_1 +\tau \xnom_1
\end{aligned} \right.\right\}\enspace,
\end{equation}
with $\tilde{\kappa}(m) = \bar{\theta}^{-1}\kappa(\bar{\theta} m )$ and $\bar{\theta}=\sum_{k\in[K]}\rho_k\theta_k$.
\end{prop}
\begin{proof}
    Using the characterization of the equilibrium in~\eqref{eq:q_mu}, $q_{\mu_k}(r) = \theta_k q_{\mu_1}(r)$. Therefore, $F_{\mu_k}(y) = F_{\mu_1}\left(\tfrac{y}{\theta_k}\right)$ and $f_{\mu_k}(y) = \tfrac{1}{\theta_k}f_{\mu_1}\left(\tfrac{y}{\theta_k}\right)$.  Moreover, 
    $$
    \begin{aligned}
    \gamma(\mu_k) &= \int_{\R}\fnom_k(x)\exp\left(\frac{\xnom_k\beta(F_{\mu_k}(x))-px}{2c_k\sigma_k^2}\right)dx 
    \\
    &= \int_{\R}\tfrac{1}{\theta_k}\fnom_1\left(\tfrac{x}{\theta_k}\right)\exp\left(\frac{\xnom_1\beta\left(F_{\mu_1}\left(\tfrac{x}{\theta_k}\right)\right)-p\tfrac{x}{\theta_k}}{2c_1\sigma_1^2}\right)dx
    = \gamma(\mu_1)\enspace.
    \end{aligned}$$
    Therefore, $V_k(R_k,\mu_k) = \theta_k V_1(R_1,\mu_1)$. As $\Vpi_k = \theta_k\Vpi_1$, the utility constraint is satisfied for every sub-population.
\end{proof}
\Cref{prop::optimum_heterogeneous} shows that in this specific case of heterogeneous population, the problem boils down to the homogeneous framework, up to a re-scaling of the cost function $\kappa$. Therefore, \Cref{prop::optimal_reward,corol::decreasing,prop::bounded_reward} still apply, and in particular, the optimal distribution is $\mu^*_1=\mathcal{N}(m_1^*,\sigma_1\sqrt{T})$ where $\mu^*_1$ is uniquely determined by the equation $m_1^* -\xpi_1 = \frac{T}{2c_1}(p-\tilde{\kappa}'(m_1^*))$. The condition~\eqref{eq::condition_scaling} corresponds to the case where (i) the volatility of the noise is proportional to the nominal consumption and where (ii) the price elasticity is identical for all sub-populations (see~\Cref{sec::instances} and~\eqref{eq::elasticity} for the link between the cost of effort $c_k$ and the elasticity). 
The second statement (ii) may be more debatable, as the elasticity of a consumer intuitively depends on the equipment of the housing (for instance the type of heating).


\section{Numerical resolution in the non-uniform heterogeneous case}\label{sec::reward_optim}
\paragraph{Restriction to bounded piecewise linear rewards}
As detailed in section~\ref{subsec::heterogeneous_pop}, solving analytically the non-uniform heterogeneous case is much harder due to the coupling constraint that imposes a common reward function across the sub-populations. Therefore, we develop a numerical algorithm to compute the best \textit{decreasing bounded piecewise linear} reward for the original problem~\eqref{eq::master_problem}, which is a novel approach in the context of mean-field principal-agent ranking games.
 To this end, for a given $N\in\N$, we denote by $\Sigma_N$ the uniform discretization of the interval $[0,1]$ by $N$ points, such that
$\Sigma_N := \{0 = \eta_1 < \eta_2 < \hdots < \eta_N = 1\}$.
Let $M\in\R_+$, then we define the class of bounded piecewise linear rewards adapted to $\Sigma_N$ as $$
\widehat{\mathcal{B}}^N_M
:= \left\{r \in [0,1] \mapsto \sum_{i=1}^{N-1}\mathds{1}_{r \in [\eta_i,\eta_{i+1}[}\left[b_i + \frac{b_{i+1} - b_i}{\eta_{i+1} - \eta_i}(r - \eta_i) \right]\quad\left|\quad\begin{aligned} &b\in[-M,M]^N\\
&b_1 \geq \hdots \geq b_N
\end{aligned}\right.\right\}\enspace.
$$ 
The reward function obtained  as a linear interpolation of a non-increasing vector $b$ is denoted by $\hat{\beta}[b]$.
For this special class of rewards, the computation of some integrals can be simplified. The integral that appears in the equilibrium characterization~\eqref{eq:q_nu} becomes
$$
\begin{aligned}
&\int_0^1 \exp\left(-\frac{\xnom_k\hat{\beta}[b](r)}{2c_k\sigma_k^2}\right) dr \\
&=\sum_{i=1}^{N-1}\begin{cases}
    \frac{2c_k\sigma_k^2}{\xnom_k} \frac{\eta_{i+1} - \eta_i}{b_{i+1} - b_i} \left[\exp\left(\frac{-\xnom_k b_{i+1}}{2c_k\sigma_k^2}\right) - \exp\left(\frac{-\xnom_k b_{i}}{2c_k\sigma_k^2}\right)\right] \text{ if } b_{i+1} < b_i,\\
    (\eta_{i+1} - \eta_i) \exp\left(-\frac{\xnom_k b_{i}}{2c_k\sigma_k^2}\right) \text{ if } b_{i+1} = b_i
\end{cases}
\end{aligned}
$$
This integral is a continuous function of $b$, as the value for $b_{i+1} = b_i$ corresponds to the limit of the expression for $b_{i+1}<b_i$.
Also, the integral of the bonus simplifies into
$$\int_0^1 \hat{\beta}[b](r) dr = \sum_{i=1}^{N-1}(\eta_{i+1} - \eta_i)\left(\frac{b_{i+1} + b_i}{2}\right)\enspace.$$

\begin{remark}
    Even if we restrict in this numerical section the study to bounded piecewise linear rewards, the problem remains a mean-field game, i.e., the population's distribution is not discretized. In particular, the expression of the agents' best-response~\eqref{eq:q_mu} still applies.
\end{remark}

\paragraph{Box maximization.} 
We denote by $\pi_\lambda:\calB \to \R$ the Lagrangian function of~\eqref{eq::master_problem}, defined as
\begin{equation}\label{eq::lagrangian_function}
\pi_\lambda(\beta) :=  
\left\{\left.
\begin{split}
p m_{\mu} - \kappa(m_\mu) - \sum_{k\in[K]}\rho_k\xnom_k\int_{0}^{1} \beta(r)dr \\
- \lambda \sum_{k\in[K]}\rho_k \left(\Vpi_k+\tau \xnom_k - V_k(R_k,\mu_k)\right)^+
\end{split}
\;\right\vert\; 
\begin{split}
    &R_k(x,r) = \xnom_k \beta(r) - p x\\
    &\mu_k = \epsilon_k(R_k)
\end{split}
\right\}\enspace,
\end{equation}
where $(\cdot)^+ := \max(0,\cdot)$. For a given parameter $\lambda > 0$, $\pi_\lambda$ constitutes a relaxed version of the initial problem~\eqref{eq::master_problem}, where violations of the utility condition are not fully forbidden but rather strongly penalized in the objective for large values of $\lambda$. We then focus on the maximization of the Lagrangian function $\pi_\lambda$ over the class of bounded piecewise linear rewards, i.e.,
$\sup_{\beta \in \widehat{\calB}^N_M} \pi_\lambda(\beta) \enspace.$ 
To this end, we define the following transformation:
\begin{equation}
\begin{aligned}
\phi^N_M:&\;[\shortminus 1,1]^N &\to&\; [-M,M]^N\\
&\qquad z&\mapsto&\quad b
\end{aligned}
\text{ \;where } \begin{cases}
b_1 = M z_1\\
b_i = \frac{1}{2}(b_{i-1} - M) + \frac{1}{2}(b_{i-1} + M) z_i,\;i>1
\end{cases}\enspace.
\end{equation} 
We also define:
$$
\left(\phi^N_M\right)^{\shortminus 1}(b)
:=
\left\{
\begin{aligned}
&z_1 = \frac{1}{M} b_1\\
&z_i = \frac{2 b_i - b_{i-1} + M}{b_{i-1} + M},\;i>1 \text{ and } b_{i-1} > -M\\
&z_i = -1,\; i>1 \text{ and } b_{i-1} = -M
\end{aligned}
\right.$$
Then, for any $b\in\{b\in[-M,M]^N\;|\;b_1\geq \hdots \geq b_N\}$, $z=\left(\phi^N_M\right)^{\shortminus 1}(b)\in[-1,1]^M$ and $b = \phi_M^N(z)$.
As an example, \Cref{fig::ex_transfo} displays $(\eta_i,z_i)_{i\in[N]}$ and the corresponding bonus function $\hat{\beta}[\phi_M^N(z)]$. Note that for any vector $b$ such that $b_i>-M$ for all $i$, $z=\left(\phi^N_M\right)^{\shortminus 1}(b)$  is the unique counterimage of $b$ by $\phi_M^N$. However, for vectors $b$ containing values $-M$, other vectors $z$ exist.

\begin{prop}[\textit{Maximization with box constraints}]\label{prop::box_max}
    \begin{equation}
 \sup_{z\in[-1,1]^N} \pi_\lambda(\widehat{\beta}[\phi^N_M(z)]) = \sup_{\beta \in \widehat{\calB}^N_M} \pi_\lambda(\beta) \enspace.
    \end{equation}
\end{prop} 
\begin{proof}
    By definition of $\widehat{\calB}^N_M$, $\sup_{z\in[-1,1]^N} \pi_\lambda(\widehat{\beta}[\phi^N_M(z)]) \leq \sup_{\beta \in \widehat{\calB}^N_M} \pi_\lambda(\beta)$, as $\widehat{\beta}[\phi^N_M(z)]\in \widehat{\calB}^N_M$. Conversely, to each vector $\beta\in\widehat{\calB}^N_M$, there is at least one vector $z\in[-1,1]^M$ such that $\beta = \widehat{\beta}[\phi^N_M(z)]$, see the definition of $\left(\phi^N_M\right)^{\shortminus 1}$, therefore $\sup_{z\in[-1,1]^N} \pi_\lambda(\widehat{\beta}[\phi^N_M(z)]) \geq \sup_{\beta \in \widehat{\calB}^N_M} \pi_\lambda(\beta)$.
\end{proof}

\begin{figure}[!ht]
\centering
\includegraphics[width = 0.6\linewidth]{./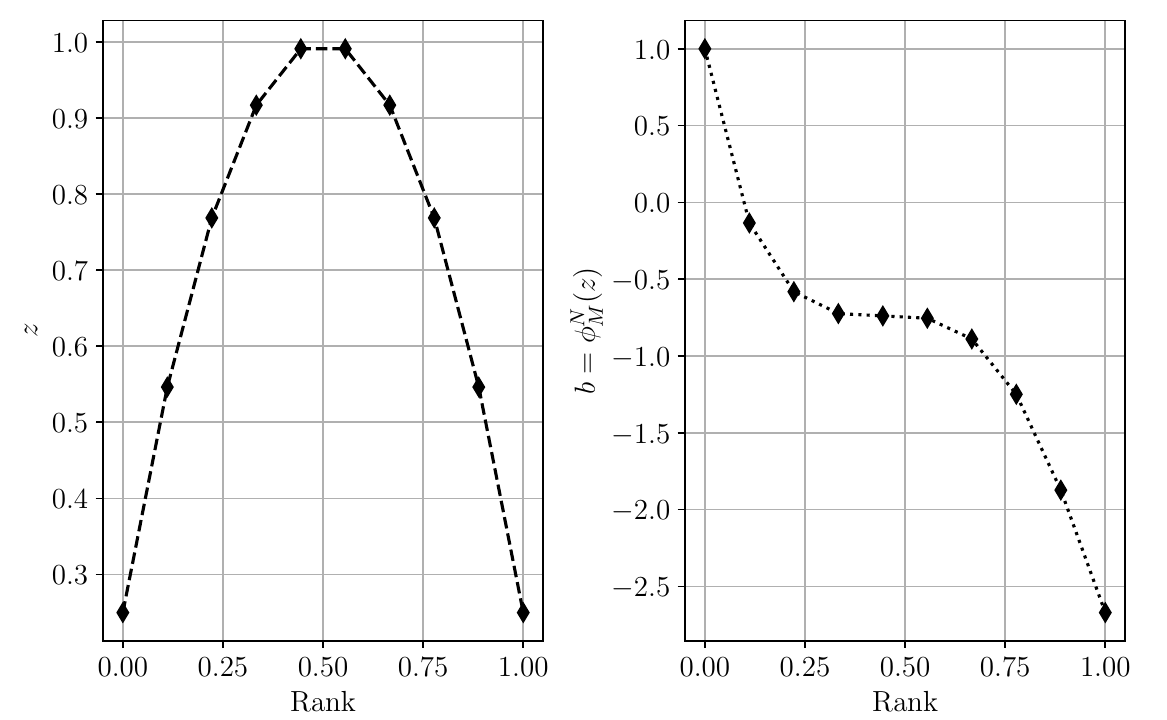}
\caption{Example of transformation using function $\phi^N_M$ for $M = 4$ and $N=10$}
\label{fig::ex_transfo}
\end{figure}
\Cref{prop::box_max} shows that the problem of maximizing the Lagrangian function $\pi_\lambda$ over the class of rewards $\widehat{\calB}^N_M$ is equivalent to the maximization of the continuous function $z\mapsto\pi_\lambda(\widehat{\beta}[\phi^N_M(z)])$ over the box $[-1,1]^N$, for which we show below the existence of an optimal solution (however, contrary to the the homogeneous case in~\Cref{corol::decreasing}, the uniqueness of the optimum is not proven).

\begin{corollary}[Existence of a solution]
    Let $N\in\mathbb{N}$ and $M\in\mathbb{R}_+$. Then, there exists $z^*\in[-1,1]^N$ such that $\pi_\lambda(\widehat{\beta}[\phi^N_M(z^*)]) = \sup_{z\in[-1,1]^N} \pi_\lambda(\widehat{\beta}[\phi^N_M(z)])$, 
    and consequently by defining $\beta^* := \widehat{\beta}[\phi_M^N(z)]$, it holds that $\pi_\lambda(\beta^*) = \sup_{\beta \in \widehat{\calB}^N_M} \pi_\lambda(\beta)$.
\end{corollary}
\begin{proof}
     The functions $b\mapsto\hat{\beta}[b]$ and $z\mapsto\phi^N_M(z)$ are continuous by definition. Also, the mean consumption $m_\mu$ (resp.~the value function $V_k$) that appears in the definition of the Lagrangian function $\pi_\lambda$~\eqref{eq::lagrangian_function} is a continuous function of the reward, see~\eqref{eq:q_nu} (resp.~\Cref{prop::best_response}).
     Therefore, the objective function $z\mapsto \pi_\lambda(\widehat{\beta}[\phi_M^N(z)])$ is continuous and has an optimum over the compact set $[-1,1]^N$.
\end{proof}

From a computational viewpoint, the search space is now independent of $M$, and the decreasingness of the bonus function is directly encoded in the transformation $\phi^N_M$. The only remaining constraints are the ones ensuring that the solution belongs to the unit box. The search is then achieved by black-box optimization, since the evaluation of $\pi_\lambda$ can be explicitly done using~\eqref{eq:q_nu}-\eqref{eq:q_mu}). In the numerical results, we use CMA-ES~(\cite{Hansen_2006}) as optimization solver through the C++ interface (\cite{Fabisch_2013}).
Convergence properties of the solver are analyzed in~\cite{Hansen_1997}, and we display in~\Cref{sec::num_res} the numerical convergence of the objective along the iterations. 
\begin{remark} (i) The evaluation of $\pi_\lambda$ linearly depends on the number of sub-populations (i.e., $K$) since, given a reward, the problem boils down to the computation of the equilibrium distributions for the $K$ sub-populations.\\
        (ii) The reward function found by~\Cref{algo::CMA} is bounded and decreasing, but might violate the utility constraint ``$V_k(R,\mu_k) \geq \Vpi_k+\tau \xnom_k$" for small penalization values of $\lambda$. 
\end{remark}

\begin{algorithm}[!ht]
\caption{\textsc{Optimization of the reward}}\label{algo::CMA}
\begin{algorithmic}
\Require $M$, $N$, $\lambda$, $\Sigma_N$, solver $\Pi$, initial point $z^0$,
\State Construct $\Theta$ as
\begin{equation}
\Theta : z\in[\shortminus 1,1]^N \mapsto \pi_\lambda \left(\hat{\beta}[\phi_M^N(z)]\right)\end{equation}
\State Apply $\Pi$ to maximize $\Theta$ (starting from $z^0$) and get the final state $z^\Pi$.
\State\Return $\beta^\Pi = \hat{\beta}[\phi^N_M(z^\Pi)]$. 
\end{algorithmic}
\end{algorithm}

\section{Application to Energy Savings}\label{sec::num_res}

In this section, we develop a case study related to the French market of Energy Saving Certificates based on the use of realistic data. We compare the results with existing reward mechanisms, and analyze them in terms of consumption reduction (relatively to the target imposed by the European commission).
\subsection{Instances}\label{sec::instances}
\paragraph{Consumers} We consider the case where the retailer aims at designing a reward for 4 types of consumers, listed in~\Cref{table::conso_usage}. Data on the average annual consumption correspond to the French case.
\begin{table}[!ht]
\footnotesize
\centering
  \begin{tabular}{rccccp{3cm}<{\centering}}
    \toprule
     & Distribution & Housing & Heating & Nb occupants & Consumption (mean/year)\\\midrule
Sub-pop. $1$ & 26\% & House 70 m$^{2}$ &  Electric &  3 & 9.9 MWh \\
Sub-pop. $2$ & 49\% & House 70 m$^{2}$ & Non-electric & 3 & 1.5 MWh \\
Sub-pop. $3$ & 9\% & House 150 m$^{2}$ &  Electric &  4 & 20 MWh\\
Sub-pop. $4$ & 16\% & House 150 m$^{2}$ & Non-electric & 4 &  2.2 MWh\\
\bottomrule
\end{tabular}
\caption{Annual electricity consumption by type of usage.\\The consumption data are extracted from ``Agence France Electricité"\protect\footnotemark.}
\label{table::conso_usage}
\end{table}
The consumers are here distinguished according to the surface of the housing and the type of heating, which can represent up to 90\% of the annual consumption. A more elaborated clustering might also take into account the location of the housing or the age of the occupants, but we focus here on the two main factors affecting the consumption. We suppose for simplicity that the overall population is composed of these four sub-populations, representing a total of 33 millions of households (current number of households in France). The distribution of the sub-populations is then computed by considering that there are thrice as many 70m$^2$-houses as 150m$^2$-houses (the mean surface in France\footnote{\url{https://www.lamaisonsaintgobain.fr/blog/insolites/metre-carre-et-confort-connaissez-vous-la-moyenne-francaise}} in around 90m$^2$) and that a 35\%\footnote{\url{https://www.voltalis.com/comprendre-electricite/les-types-de-chauffage-preferes-des-foyers-francais-1772}} of the French households is equipped with electric heating. This gives us a mean annual consumption of $5.46$MWh, or a total annual consumption of 180TWh. In comparison, the French annual consumption for residential households is around 155TWh. This slight over-estimation is due to the fact that we only consider here houses with three or four occupants.

We suppose that the consumption levels displayed in~\Cref{table::conso_usage}\footnotetext{\url{https://www.agence-france-electricite.fr/consommation-electrique/moyenne-par-jour/}} corresponds to customers having subscribed to a regulated offer, corresponding to a fixed price of electricity $p$. As showed in~\Cref{corol::pi}, nominal consumption ($\xnom$) and consumption under price $p$ ($\xpi$) are linked by the relation $\xpi = \xnom - \frac{p}{2c}$ (we consider annual consumption in~\Cref{table::conso_usage}).

In~\cite{Niromandfam_2020}, the authors used several concave utility function to model the price elasticity of the electricity demand. In particular, they studied a quadratic utility function similar to the cost of effort we consider: for a one-year horizon and constant effort, 
$
\Vpi_k = \max_{x\in \R} \{-px - c(x-\xnom)^2\}\enspace.
$
This corresponds to the welfare maximization with quadratic utility, defined as $U(x,\xnom) = -c(x - \xnom)^2$. In other words, we reinterpret -- in a simple case -- the effort cost of the consumer model~\eqref{eq::consumer_problem} as a quadratic utility function in the sense of~\cite{Niromandfam_2020}.
For this type of utility function, the elasticity is defined as $\eta=1-\tfrac{\xnom}{\xpi}$, see e.g.~\cite[Eq. 19]{Niromandfam_2020}).
As a consequence, using the relation between $\xpi$ and $\xnom$ and the definition of the elasticity, one can obtain the following relations:
\begin{equation}\label{eq::elasticity}
    c = \frac{-p}{2\eta\xpi}\enspace,\qquad \xnom = \xpi(1-\eta)\enspace.
    \end{equation}
Several values of price elasticity are reported in~\cite{Niromandfam_2020,Csereklyei_2020}, and we use here $\eta=-0.32$, which corresponds to the estimation of the long-run residential price elasticity made by~\cite{Bonte_2015} on the EPEX spot market between 2012 and 2014. Price elasticity is always studied at the scale of a country (or even broader), and therefore we take an estimate which is identical for all the agents (\emph{uniform} elasticity). In the numerical results, we will analyze the influence of a non-uniform elasticity, see~\Cref{sec::num_res}. 

Regarding the volatility, in the Low Carbon London pricing study, Carmichael et al.~\cite{Carmichael_2014} reported a deviation of $\pm$200 Watt for a demand of 1000 Watt. We take here a deviation $\sigma\sqrt{T}$ equals to 10\% of the total consumption $X_T$ under zero effort for each of the four sub-populations. Finally, we consider here for $p$ the price of the regulated offer (``Tarif Bleu") in 2019, that is 145 \euro/MWh\footnote{\url{https://prix-elec.com/tarifs/evolution/2019}}.

\begin{table}[!ht]
\centering
\footnotesize
\begin{tabular}{rp{3cm}<{\centering}p{2.5cm}<{\centering}}
    \toprule
     & $c_k$ (\euro/MWh$^2$) & $\sigma_k$ (MWh)\\
    \midrule
    Sub-pop. 1 & 24 & 0.57\\
    Sub-pop. 2 & 156 & 0.09\\
    Sub-pop. 3 & 12 & 4.15\\
    Sub-pop. 4 & 107 & 0.13\\
\bottomrule
\end{tabular}
\caption{Cost of effort and volatility parameters.}
\label{table::costs_effort}
\end{table}

\paragraph{Retailer cost} We consider here the year 2019 (just before the energy crisis). In this numerical study, we suppose that the retailer can access to the energy at the production cost. This represents either that the retailer can produce the electricity himself or that there is a perfect competition on the market. This approximation simplifies the discussion and make easier the interpretation of the model but other cost function could be studied. We display in~\Cref{table::costs_retailer} the marginal cost and the annual production for each type of power plants. This representation is a simplified view of the spot power market where the equilibrium is hourly  but this simplification is not limiting in our view the application and any other cost could be taken.

\begin{table}[!ht]
\centering
\footnotesize
\begin{tabular}{rp{3cm}<{\centering}p{2.5cm}<{\centering}}
    \toprule
    Power plant & Marginal cost (\euro/MWh) & Production (TWh)\\
    \midrule
    Hydro/Wind/Solar & 0 to 15 & 115\\
    Nuclear & 30 & 380\\
    Gas & 70 & 30\\
    Coal & 86 & 7\\
    Fuel & 162 & 5\\
\bottomrule
\end{tabular}
\caption{Marginal price and annual production. Source: \emph{RTE Bilan électrique 2019} and \emph{Ademe}}
\label{table::costs_retailer}
\end{table}

By aggregating the production capacities by increasing cost (as in merit order curves for day-ahead markets), we can obtain an estimate of the supply cost according the production, see~\Cref{fig::costs_retailer}. The total cost is then obtained by dividing the supply cost by $0.35$ as this approximately corresponds to the weight of supply in the total cost\footnote{\url{https://www.ecologie.gouv.fr/commercialisation-lelectricite}}. To fit with our situation where we only look at the residential part of the consumption, we shift the cost curve so that a residential consumption of $180$TWh is ``cleared" by a gas power plant (as it is often the case in the day-ahead market) and we regularize it to be differentiable.
We use in~\eqref{eq::master_problem} a mean cost function that depends on the mean consumption of the overall population for notation convenience. Therefore we design the mean cost function $\kappa$ by normalize the $x$-axis of the curve by the 33 millions of households of the overall population.

\begin{figure}[!ht]
    \centering
    \includegraphics[width=0.7\linewidth]{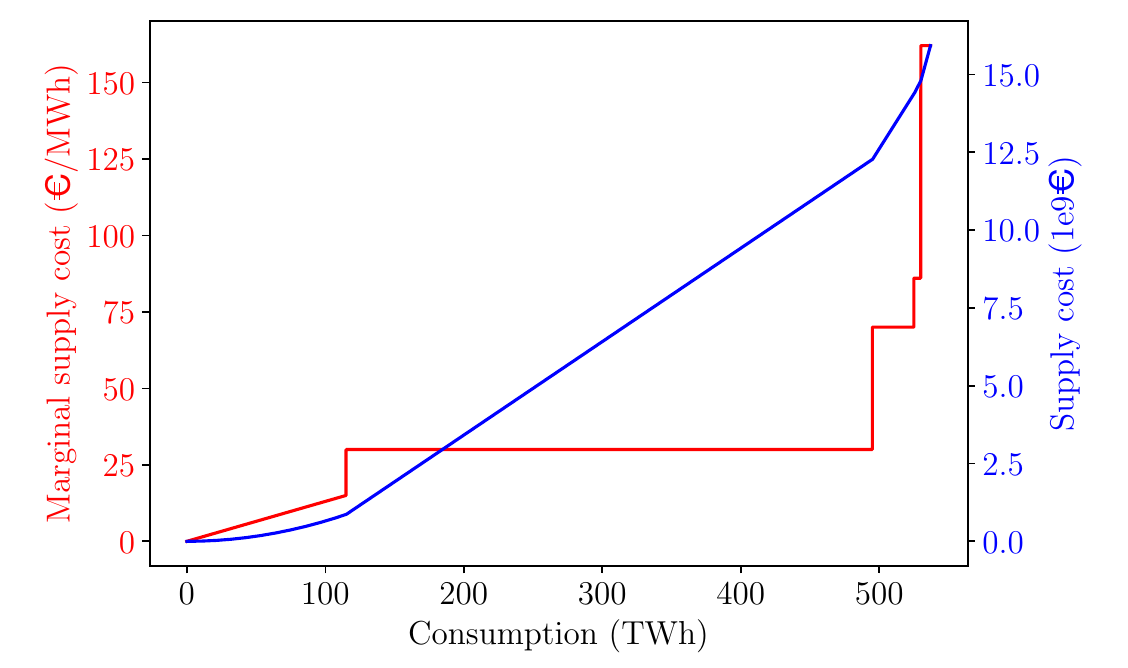}
    \caption{Estimation of supply cost through marginal costs}
    \label{fig::costs_retailer}
\end{figure}

\paragraph{Valuation of energy savings}
Electricity retailers are obliged by the French regulation\footnote{Loi POPE, 2005 : \url{https://www.ecologie.gouv.fr/dispositif-des-certificats-deconomies-denergie}} to reduce the global consumption of their customers, in the context of energy efficiency and sobriety. From 2024 to 2030, the European regulation will impose a reduction target of 1.49\% of the annual consumption, and aspire to reach 1.9\% by the end of 2030. If a retailer does not succeed in gathering a sufficient amount of Energy Saving Certificates, a penalty of 15\euro /MWh is applied (for ``classic" certificates)\footnote{\url{https://www.calculcee.fr/les-primes-cee.php}}. In addition, each provider can buy (resp. sell) on a market a certain quantity of certificates if the quantity of energy consumption overshoots (resp. undershoots) the target. In 2023, the price of certificates is around 7.5\euro /MWh\footnote{\url{https://c2emarket.com/}}. We consider here a target of 5\% of consumption reduction over 3 years ($T=3$), corresponding to a mean consumption of $15.6$MWh for the three years. The valuation function is then defined as $s_\theta(m) = \softplus_\theta(15(m-15.6))$, where $\softplus_\theta = \theta^{\shortminus 1} \log(1+\exp(\theta x))$. \Cref{fig::penalty} shows the two extreme cases : a purely liquid market ($\theta=0$) and the absence of exchange ($\theta=\infty$). We choose here $\theta=0.3$ to represent an intermediate case.
\begin{figure}[!ht]
    \centering
    \includegraphics[width=0.55\linewidth]{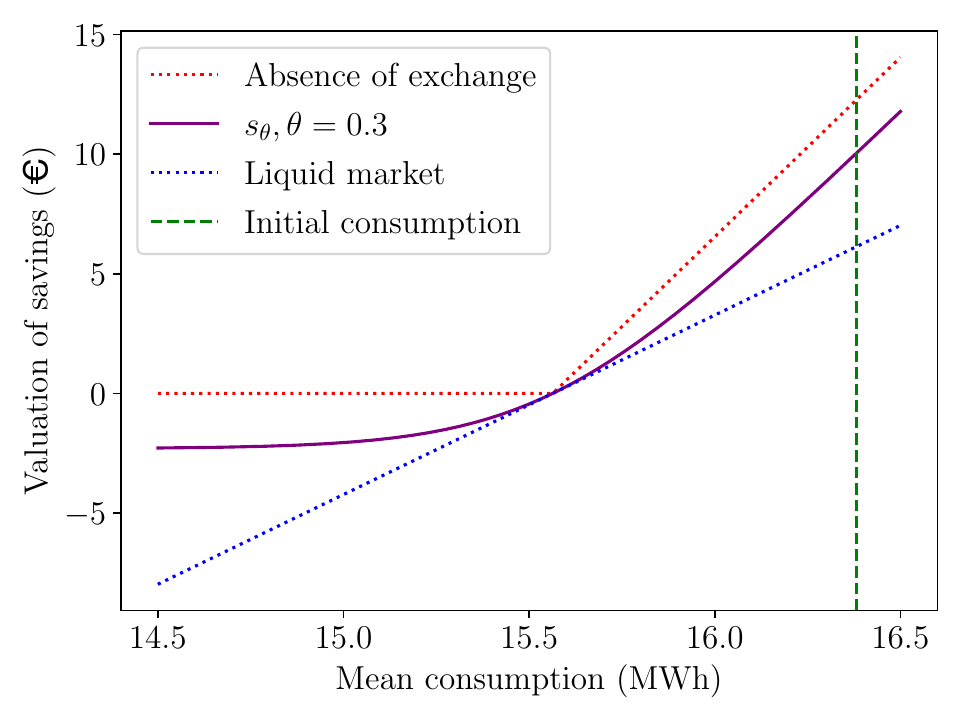}
    \caption{Penalty function $s(\cdot)$ given by the regulator.}
    \label{fig::penalty}
\end{figure}

\subsection{Numerical Results}

We use $N=20$ discretization points for the bonus description and $M = 0.1p$. This means that the maximal unitary bonus given to an agent cannot exceed $10\%$ of the electricity price. We take $z_0 \equiv 1$ as initial guess. The main advantage of this initial guess is that it satisfies the reservation utility constraint (if $\tau < M$). The step-size parameter of CMA is fixed to 0.05.
The numerical results\footnote{The whole code is available on the GitHub repository: \url{https://github.com/jacquq/rk_games_electricity}.} -- parallelized on 10 threads -- were obtained on a laptop i7-1065G7 CPU@1.30GHz.

\paragraph{Uniform elasticity} \Cref{fig::scalable_case} shows the results for the test case described in~\Cref{sec::instances}, where the price elasticity is identical for all the sub-populations. As a consequence,  \Cref{prop::optimum_heterogeneous} applies and we can analyze in this setting the performance of the numerical solving procedure: in~\Cref{fig::retailer_4k_1}, the reward found by~\Cref{algo::CMA} is very close to the (theoretical) optimal reward, showing that the solver successfully finds the global optimum. 
About the computational cost, the algorithm converged in approximately 3000 iterations (around 400 seconds), but succeeded in reducing the optimality gap to less than 0.5\% in 100 iterations.  

\begin{figure}[!ht]
    \centering
    \begin{subfigure}{.47\textwidth}
        \centering
        \includegraphics[width=\linewidth,clip=true,trim=.3cm 0cm 1.3cm 1.4cm]{./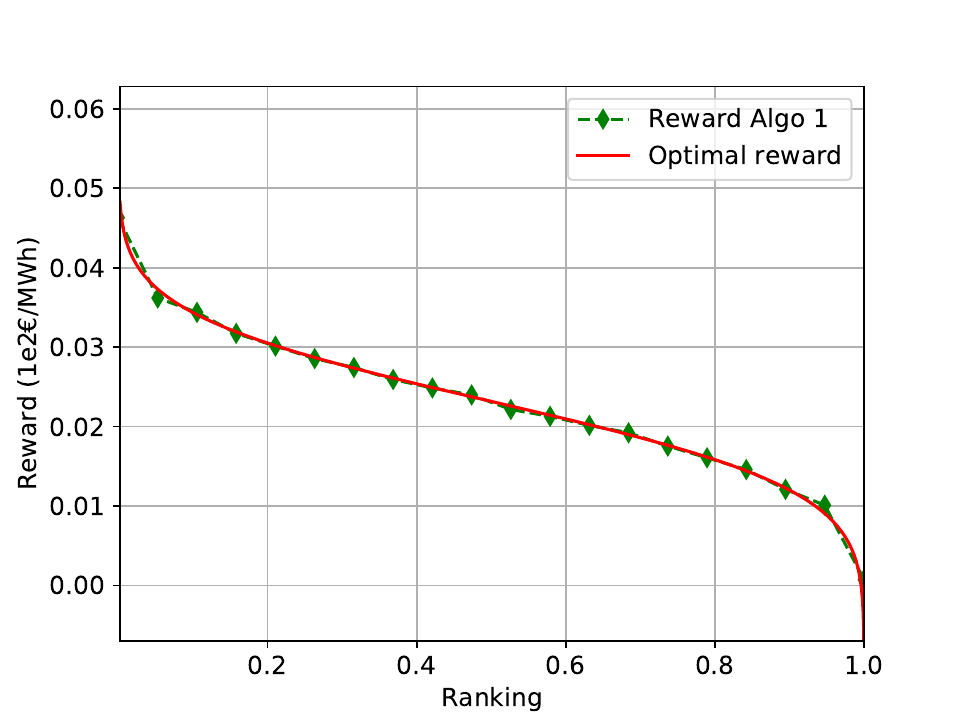}  
        \caption{Analytic optimal reward in red, compared to the unitary bonus function found by~\Cref{algo::CMA}.}
        \label{fig::retailer_4k_1}
    \end{subfigure}
    \hspace{0.2cm}
    \begin{subfigure}{.47\textwidth}
        \centering
        \includegraphics[width=\linewidth,clip=true,trim=.3cm 0cm 1.3cm 1.4cm]{./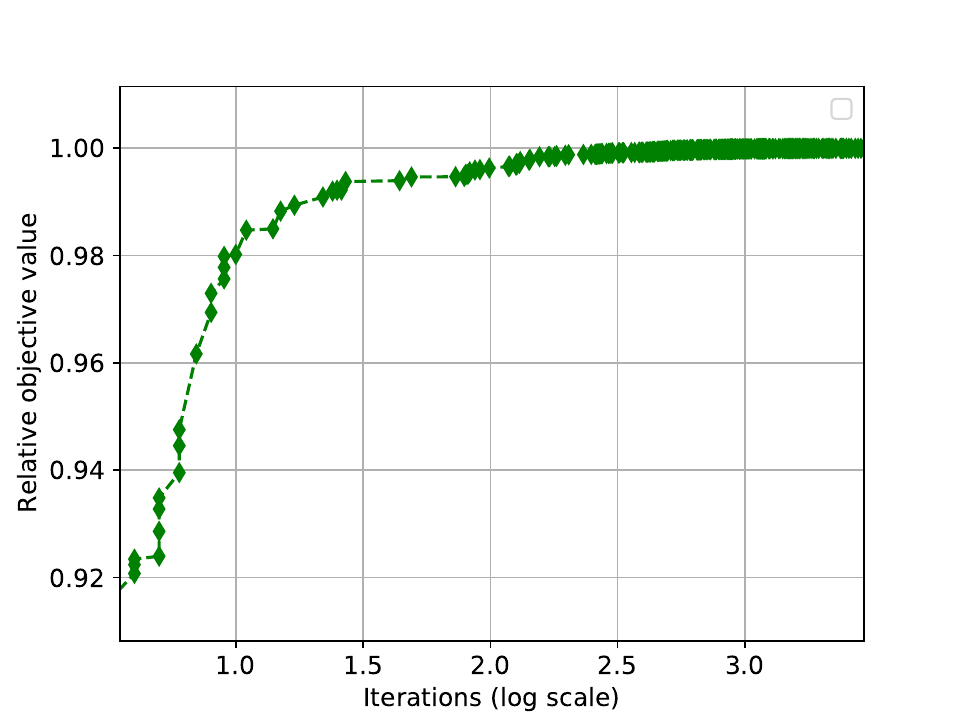}  
        \caption{Evolution of the relative objective value along the iterations.}
        \label{fig::convergence_4k_1}
    \end{subfigure}
    \begin{subfigure}{.98\textwidth}
        \centering
        \includegraphics[width=\linewidth,clip=true,trim=0cm 0cm 0cm 1cm]{./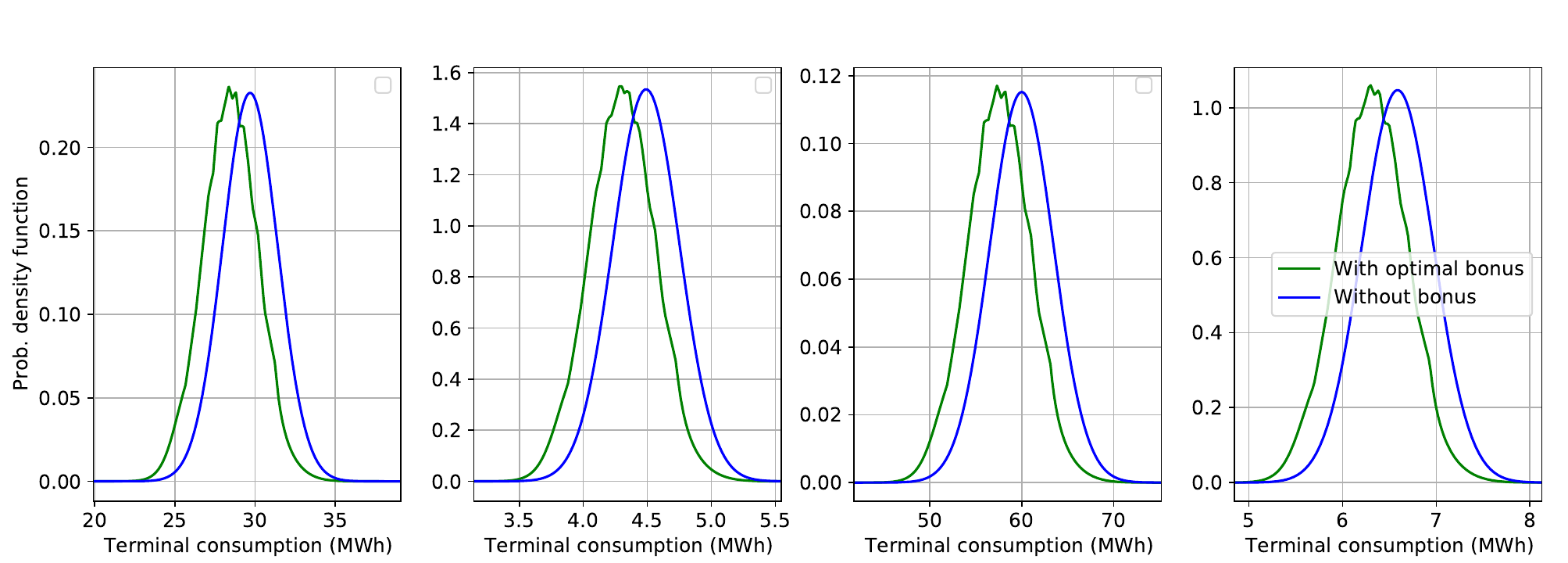}  
        \caption{Distribution of the cumulated consumption over the period $[0,T]$ for the four sub-populations}
        \label{fig::pdf_4k_1}
    \end{subfigure}
    \caption{Numerical results for the four populations described in~\Cref{table::conso_usage,table::costs_effort} (scalable case).}
    \label{fig::scalable_case}
\end{figure}

We depict in~\Cref{fig::scalable_case} the distribution of the cumulated consumption over the period $[0,T]$ for the four sub-populations with and without the bonus. As shown in~\Cref{corol::pi}, the distribution without reward is a Gaussian process centered in $\xpi$ (which corresponds to three times the annual consumption displayed in~\Cref{table::conso_usage}). The terminal distribution with the optimal reward is then a shift of this normal distribution -- see~\Cref{prop::optimum_heterogeneous}. We observe that, as expected, the terminal distribution is also identical for the four sub-populations, up to a scaling ($f_{\mu^*_k}(x) = \theta_k^{\shortminus 1}f_{\mu^*_1}(\theta_k^{\shortminus 1}x)$). Here, the mean pluriannual consumption in the whole population decreased from 16.38MWh to 15.7MWh, giving a saving ratio of 4.1\%. This must be compared with the initial objective of the regulator (a reduction of 5\% of the pluriannual consumption): the retailer found a compromise between the penalty imposed by the regulator, the cost to propose a reward mechanism, and its natural willingness to sell electricity. Note that~\Cref{fig::scalable_case}, (c) shows that this reduction in consumption does not increase variance.

The optimal bonus offered to customers takes a very intuitive shape. 
Indeed, it is rather linear with the rank except for the extreme ranks for which the incentive is amplified. We expect this form of contract to be easy to describe to customers and this facilitates the implementation in practice. We observe negative values for the 1\% consuming the most  (we choose $\tau$ a posteriori in this sense) and goes up to more than 4\euro ~per MWh, which corresponds to a bonus of 66\euro ~in average over the three years. This should be compared for instance with the ``Bonus Conso" proposed by TotalEnergies\footnote{\url{https://totalenergies.com/fr/actualites/communiques-de-presse/bonus-conso-hiver-2023-2024-totalenergies-recompense-ses-clients}}, where 30\euro ~are proposed for a reduction of 5\% over one year. This confirms that the contract we designed is realistic and can be accepted by customers in practice.


\paragraph{The $N$-players game}
We now numerically illustrate the behavior of several individual consumers incentivized by the optimal bonus found in~\Cref{fig::retailer_4k_1}. The simulation of the trajectories is done using a Euler-Maruyama scheme, see e.g.~\cite{Ngo_2015} for details on the discretization, as for convergence rates.  

\begin{figure}[!ht]
    \centering
    \includegraphics[width=0.55\linewidth,clip=true,trim=.2cm 0cm 1.3cm 1.2cm]{./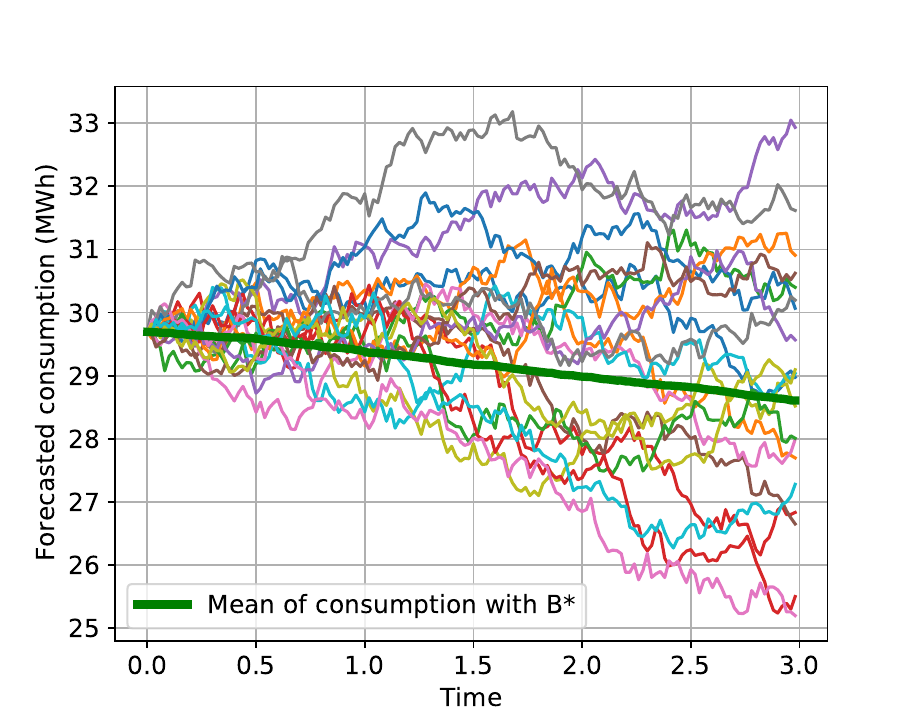}  
  \caption{Deviation of the consumption from the no-bonus case\\~
  Trajectories for 20 consumers from sub-population $1$.}
  \label{fig::traj_4k_1}
\end{figure}

\Cref{fig::traj_4k_1} displays the evolution of the forecasted consumption $X^{a_1^*}_1$, from which we subtracted the deviation coming from price in order to clearly distinguish the supplementary effort made through the influence of the bonus. This corresponds to the quantity 
$$
Y_1(t) = X_1^{a^*_1}(t) + \frac{p(t-T)}{2c_1}\enspace,
$$
where $a^*_1$ is the optimal effort in the presence of the bonus. 
We observe the same consumption decrease as in~\Cref{fig::pdf_4k_1}, and this reduction has a linear behavior. Indeed, we showed in~\eqref{eq::bounded_reward} that the optimal total reward is linear in $x$, and for any reward $R_{k,\mu} = \alpha_0 - \alpha_1 x$, the corresponding effort is $a_k^*(t) = -\frac{\alpha_1}{2c_k}$ -- see~\eqref{eq::optimal_effort} -- and the consumption reduction is then $\tfrac{\alpha_1}{2c_k} t$. This has a strong implication on the behavior of the model: the effort made at time $0\leq t \leq T$ by a consumer is independent from his current situation, i.e., is not influenced by the hazard $W_t$. This means that a consumer will not stop/reduce his effort even if he is undergoing an adverse hazard.

\paragraph{Non-uniform price elasticity} We now slightly change the previous test case by considering that the price elasticity is not constant across the population, but rather depends on the characteristics of each agent. In particular, we consider here that the price elasticity of a consumer with electric heating is greater than someone with another heating technology. This greater specific adaptability is for instance exploited by some energy providers\footnote{\url{https://www.sowee.fr/}}.
To see the influence of non uniform elasticity, we divide by two the elasticity of sub-populations 2 and 4 -- as they do not have electric heating -- and multiply by 1.5 the elasticity of sub-populations 1 and 3. In this setting, the scaling condition~\eqref{eq::condition_scaling} is no longer satisfied, and so, contrary to the previous case, we are not able to find the theoretical optimal bonus function, but only able to perform a numerical optimization using~\Cref{algo::CMA}.

\Cref{fig::nonscalable} shows the results for the test case with modified elasticity parameters. We use here $N=40$ discretization points and let the algorithm runs up to 5000 iterations. The convergence of~\Cref{algo::CMA} is still fast since the gap between the solution at iteration 100 was already close to the final solution to less than 1\%.  About the cumulated consumption distribution, we observe that the mean consumption for sub-populations 1 and 3 is reduced by 5.3\% whereas the mean consumption for sub-populations 2 and 4 is reduced by 2.3\%. Indeed, it reflects the increase (resp. decrease) of price-elasticity for 1 and 3 (resp. 2 and 4). This should be compared with the uniform consumption reduction of 4.1\% in the previous setting.  

The unitary bonus found by~\Cref{algo::CMA} is lower than in~\Cref{fig::scalable_case}: for example, in the uniform-elasticity case, every agent with a ranking lower than 0.6 received a unitary bonus greater than 2\euro ~per MWh, while in the non-uniform case, only consumers with ranking lower than 0.2 can claim this level of reward. This highlights the fact that the retailers does not need to propose a reward as huge as in the previous case since the reduction effort is mostly endorsed by users with electric heating, now more compliant to lower their consumption.

\begin{figure}[!ht]
    \centering
    \begin{subfigure}{.47\textwidth}
        \centering
        \includegraphics[width=\linewidth,clip=true,trim=.2cm 0cm 1.3cm 1.2cm]{./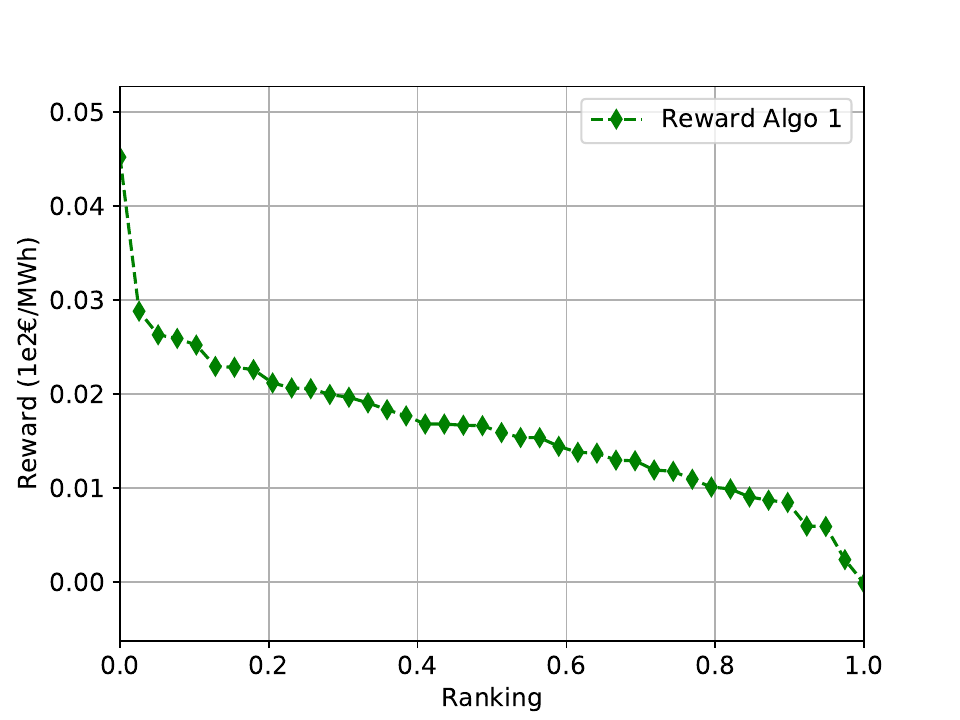}  
        \caption{Unitary bonus function found by~\Cref{algo::CMA}.~\\~}
        \label{fig::retailer_4k_2}
    \end{subfigure}
    \hspace{0.2cm}
    \begin{subfigure}{.47\textwidth}
        \centering
        \includegraphics[width=\linewidth,clip=true,trim=.2cm 0cm 1.3cm 1.2cm]{./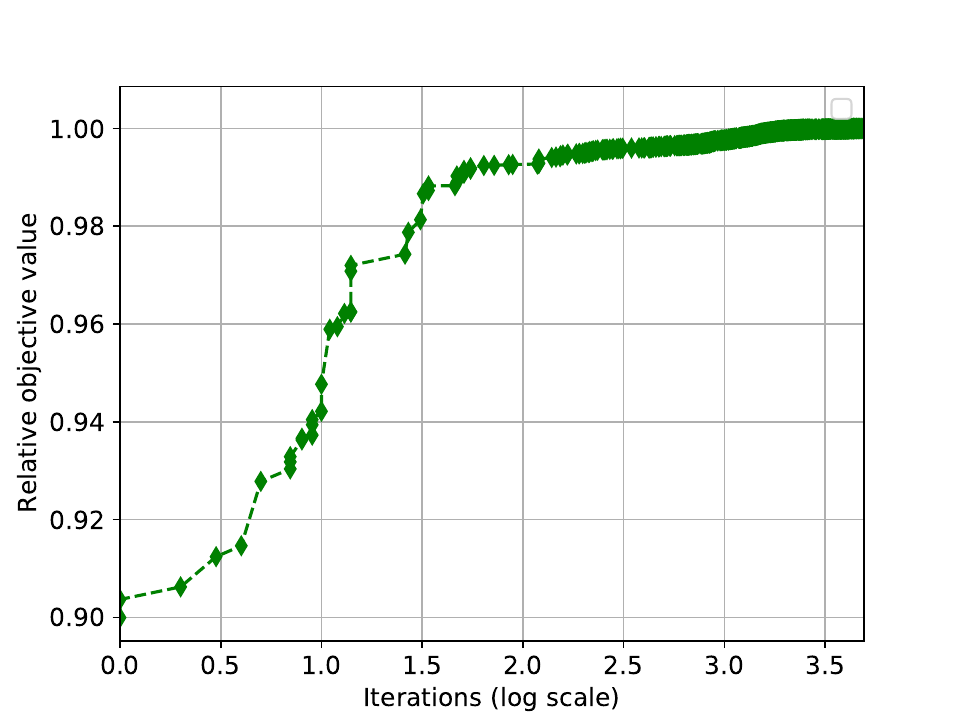}  
        \caption{Evolution of the relative objective value along the
iterations.}
        \label{fig::convergence_4k_2}
    \end{subfigure}
    \begin{subfigure}{\textwidth}
        \centering
        \includegraphics[width=\linewidth,clip=true,trim=.0cm 0cm 0cm 0cm]{./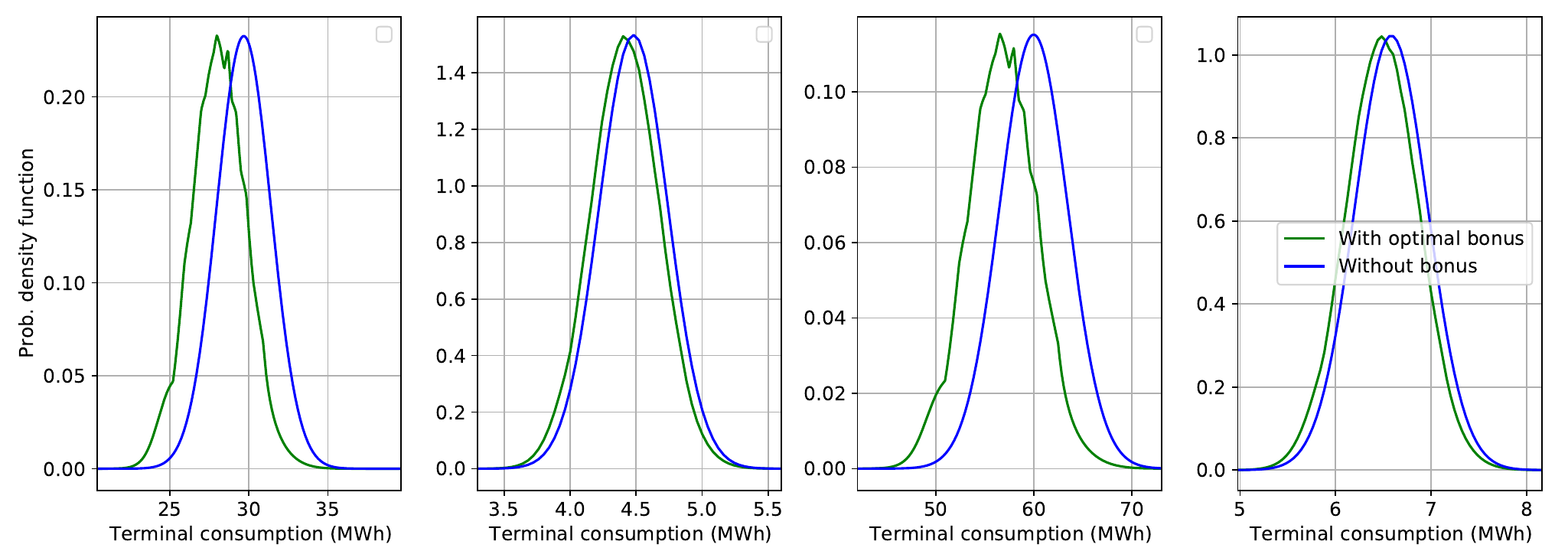}  
        \caption{Distribution of the cumulated consumption over the period $[0,T]$ for the four sub-populations}
        \label{fig::pdf_4k_2}
    \end{subfigure}
    \caption{Numerical results for the four populations with different price elasticity.}
    \label{fig::nonscalable}
\end{figure}

\section{Extensions}\label{sec::extensions}

We propose in this section several extensions to fit with more general settings.

\paragraph{Energy consumption with common-noise} The add of common-noise is not rare in the modeling of electricity consumption. But in this present case, it does not impact the retailer problem. Intuitively, as the reward is determined by the ranking of the agents, an identical perturbation of the consumption will not modify the rankings, and so the effort made by the agents is independent of the common-noise.

Let us prove this intuitive behavior. To this purpose, we fix a sub-population $k\in[K]$, and suppose that the dynamics is now described as:
\begin{equation}\label{eq::dyn_commonnoise}
d X^a_k(t) = a_k(t)dt + \sigma_k dW_k(t) + \sigma^0 dW^0(t),\quad X_k(0) = \xnom_k\enspace.
\end{equation}
\begin{prop}[Translation invariance of the effort]
    Let $R_k$ be the total reward for sub-population $k$ (satisfying~\Cref{hypo::reward}) and $\mu_k$ be the equilibrium distribution under $R_k$ and without common-noise (given by~\eqref{eq:q_nu}-\eqref{eq:q_mu}). Then
    \begin{equation}\label{eq::mu0}
        \mu^0_k := x\mapsto \mu_k(x - \sigma^0 W^0(T) )
    \end{equation}
    is a (random) equilibrium distribution under $R_k$ and dynamics~\eqref{eq::dyn_commonnoise}. 
\end{prop}
\begin{proof}
    For all $x,q \in \R$ and $\mu\in\mathcal{P}(\R)$, we have:
    $$R_{k,\mu_k}(x+q) = B_k(F_{\mu_k}(x+q)) - p(x+q) = B_k(F_{\mu_k(\cdot +q)}(x)) - p(x+q) = R_{k,\mu_k(\cdot+q)}(x) - pq\enspace.$$
    Therefore, according to the expression of the optimal effort in~\eqref{eq::optimal_effort}
    $$
    u_k(t,x+q,\mu) = q^{-1} u_k(t,x,\mu(\cdot+q))
    $$
    and $a_k(t,x+q;\mu_k) = a_k(t,x;\mu(\cdot+q))$. Therefore, the drift is translation invariant, and the results of~\cite{Lacker_2015} apply: $\mu^0$ defined in~\eqref{eq::mu0} is an equilibrium distribution for the dynamics with common-noise. 
\end{proof}

In contrast with the purely rank-based case, total rewards satisfying~\Cref{hypo::reward} are not translation invariant. Nonetheless, the drift obtained through the optimal effort is translation invariant, enabling to use the results of~\cite{Lacker_2015}. For a common-noise $W^0$ such that $\Esp[W^0(\cdot)]=0$, maximizing the (expected version of the) profit, defined in~\eqref{eq::master_problem}, will boil down to the same problem, and so will lead to the same optimal unitary reward.

\paragraph{General reward $R(x,r)$} We consider here a more general form of reward, coupling the cumulated consumption and the ranking. Therefore, Assumption~\ref{hypo::reward} is no longer satisfied and the equilibrium cannot be explicitly computed with~\Cref{prop::optimal_reward}. Instead, one can used fixed-point resolution techniques to compute the equilibrium.To this purpose, let us denote by $W_1(f_1,f_2)$ the $1$-Wasserstein metric for distribution $f_1,f_2 \in \mathcal{P}_1(\R) = \{\mu\in\mathcal{P}(\R):\,\int_{\R} |x|d\mu(x) <\infty\}$. \Cref{algo::equilibrium} follows the standard way to numerically compute mean-field Nash equilibria -- see~\cite{Achdou_2020} -- by iteratively updating the distribution using the best response operator. Here, the operator is explicitly given by~\eqref{eq::Phi_operator}, which still applies for general forms of reward function, see~\cite{Bayraktar_2019}.
\begin{algorithm}[!ht]
\caption{\textsc{Fixed-point Resolution}}\label{algo::equilibrium}
\begin{algorithmic}
\Require ~\\
\begin{itemize}
\item[-] initial p.d.f. $f_{\mu_k^{(0)}}$ of cluster $k$, 
\item[-] error tolerance $\varepsilon$, 
\item[-] iteration maximum $n_{max}$,
\item[-] sequence of damping coefficients $\{l_i\}_{i\in\mathbb{N}}$.
\end{itemize}
\State $d,\,i \gets 2\varepsilon,\,0$
\While{$d \geq \varepsilon$ or $n\leq n_{max}$}
  \State $f_{\mu_k^{(i+1/2)}} \gets \Phi_k(f_{\mu_k^{(i)}})$\Comment{Best-response map defined in~\Cref{def::Phi}}
    \State $f_{\mu_k^{(i+1)}} \gets l_i f_{\mu_k^{(i+1/2)}}+ (1-l_i)f_{\mu_k^{(i)}}$ \Comment{damping $l_i$}
    \State $d \gets W_1\left(f_{\mu_k^{(i)}},f_{\mu_k^{(i+1)}}\right)$ \Comment{distance between two iterates}
    \State $i \gets i+1$
\EndWhile
\end{algorithmic}
\end{algorithm}

Instead of Picard iterates ($l_i=1$), a decreasing damping $l_i = \left(\frac{1}{i+1}\right)^p$, $p\in \mathcal{N}$ can be used. The latter sequence of inertial parameters defines iterates of Krasnoselskii-Mann type, which has been proved to converge for pseudo-contractive map in Hilbert space, see~\cite{Rafiq_2007}. Such a damping has been used for example to solve Linear-Quadratic mean-field control problems in~\cite{Grammatico_2016}.

We then show that the uniqueness of the reward function is no longer true in the general setting, and there exists a family of equivalent reward function, going from purely rank-based rewards to purely consumption-based reward ones:
\begin{prop}[Invariance]\label{theorem::invariance}
Let $R^*(x,r)$ be an optimal reward function for the following problem 

\begin{equation}\label{eq::invariance_problem}
\max_{R(x,r)}  \left\{-\kappa(m_\mu) - \int_{\R} R_\mu(x)f_\mu(x) dx\; \left| \; \begin{aligned}
    &\mu = \epsilon(R)\\
    &V(R,\mu) \geq \Vpi
\end{aligned} \right.\right\}
\end{equation}
This equilibrium distribution obtained with $R^*$ is denoted by $\mu^*$. Then, 
\begin{itemize}
    \item [(i)]
the purely rank-based reward function $\hat{B}:r\mapsto R^*(q_{\mu^*}(r),r)$ is also an optimal reward,
\item[(ii)]
the reward function $\hat{R}:x\mapsto R^*(x, F_{\mu^*}(x))$ is also an optimal reward.
\end{itemize}
\end{prop}
\begin{proof}
    By definition, the two reward functions $\hat{B}$ and $\hat{R}$ also satisfy the characterization of the equilibrium~\eqref{eq::charac_eq} with $\mu_k=\mu^*$. Therefore, under these rewards, agents reach the same equilibrium as with $R$, and their utility is identical. Morover, the objective in~\eqref{eq::invariance_problem}.
\end{proof}
In practice, \Cref{theorem::invariance} has very useful implications. It states that complicated reward policies simplify into simple rules. 
The first item shows that we can construct a purely \emph{competitive} game in the sense that the consumers receives incentives only through their rank. The second item shows that we can construct a \emph{decentralized} reward since the incentive of each customer only depends on their own consumption.
Note that this notion of invariance applies at the equilibrium, and the equivalence of the reward is no longer true outside the equilibrium.


\paragraph{Time-dependent effort cost}
In the context of the ecological transition, the consumers are more willing to contribute to the energy reduction, and therefore the effort cost $c$ can be viewed as a time dependent parameter, modeling the change of customers' behavior.

In this case, with a cost profile $c_k(t)$, $t\in[0,T]$ for each cluster $k$, the consumer's problem becomes 
\begin{equation}\label{eq::consumer_problem_time_dependent}
    V_k(R,\mu_k) := \sup_a \mathbb{E} \left[ R_{\mu_k}(X^a_{k}(T)) - \int_0^T c_k(t) a^2_k(t) dt \right]\enspace.
\end{equation}
As a direct extension of~\cite{Bayraktar_2016}, we have the following existence result:
\begin{theorem}
Assume that the cost profiles are bounded such that there exist $ (\underline{c}_k,\overline{c}_k)$ verifying for all $t\leq T$
$$0<\underline{c}_k\leq c_k(t)\leq\overline{c}_k\enspace.$$
Then, there exists at least one equilibrium.
\end{theorem}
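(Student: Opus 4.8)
The plan is to re-run the abstract fixed-point argument behind \Cref{theorem::fixed_point}, checking that each of its ingredients survives when the constant $c_k$ is replaced by a profile $c_k(t)$ confined to $[\underline{c}_k,\overline{c}_k]$. Concretely, I would define the best-response map $\Phi_k:\mathcal{P}(\R)\to\mathcal{P}(\R)$ sending a candidate population law $\mu$ to the terminal law of $X_k^{a^*}(T)$, where $a^*$ solves \eqref{eq::consumer_problem_time_dependent} with opponents fixed at $\mu$. First I would check that $\Phi_k$ is well defined: for fixed $\mu$ the reward $R_\mu$ is bounded (since $R\in\mathcal{R}_b$) and the running cost $c_k(t)a^2$ is strictly convex in $a$ because $c_k(t)\ge\underline{c}_k>0$, so the associated HJB equation admits a value function for which a verification argument yields a unique optimal feedback $a^*(t,x)=\partial_x V(t,x)/(2c_k(t))$; the forward SDE driven by this feedback has a well-defined terminal law. (Should single-valuedness prove delicate, one replaces Schauder by the Kakutani--Fan--Glicksberg theorem and keeps the same scheme.)

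The crux is a compactness estimate uniform in $\mu$, and here the lower bound $\underline{c}_k$ does the work. Comparing the optimal value with the value of the null control $a\equiv0$ and using $|R_\mu|\le\|R\|_\infty$ gives
$$\underline{c}_k\,\mathbb{E}\!\int_0^T (a^*(t))^2\,dt\le \mathbb{E}\!\int_0^T c_k(t)(a^*(t))^2\,dt\le 2\|R\|_\infty,$$
so $\mathbb{E}\int_0^T (a^*)^2\,dt\le 2\|R\|_\infty/\underline{c}_k$ independently of $\mu$. Since $X_k^{a^*}(T)=\xnom_k+\int_0^T a^*\,dt+\sigma_k W_k(T)$, this drift bound yields a uniform second-moment bound on the terminal laws, hence a convex, $W_1$-compact set $\mathcal{K}\subset\mathcal{P}_1(\R)$ with $\Phi_k(\mathcal{P}(\R))\subseteq\mathcal{K}$. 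I emphasize that, unlike the constant-cost case, I cannot invoke the explicit Schrödinger-bridge density \eqref{eq::Phi_operator}: with a time-dependent weight the cost $\int_0^T c_k(t)a^2\,dt$ is a \emph{weighted} relative entropy and no longer disintegrates into a plain Gibbs reweighting of the terminal Gaussian, so these a priori estimates must replace the closed-form formula.

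It then remains to show $\Phi_k$ is continuous on $\mathcal{K}$ for the $W_1$ topology. If $\mu_n\to\mu$ in $W_1$, then $F_{\mu_n}\to F_\mu$ at every continuity point of $F_\mu$, so $R_{\mu_n}(x)=R(x,F_{\mu_n}(x))\to R_\mu(x)$ for a.e.\ $x$ with $|R_{\mu_n}|\le\|R\|_\infty$; stability of the stochastic control problem with respect to this bounded, a.e.-convergent terminal reward---exactly the argument of \cite{Bayraktar_2016}, whose only dependence on the cost is through its two-sided bound---transfers to convergence of the optimal controls and hence of the terminal laws $\Phi_k(\mu_n)\to\Phi_k(\mu)$. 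With $\Phi_k$ a continuous self-map of the convex compact set $\mathcal{K}$, Schauder's fixed-point theorem provides a fixed point, i.e.\ an equilibrium distribution, which is the assertion. The main obstacle is precisely the second paragraph: absent the closed-form best response, one must extract compactness and continuity directly, and the uniform bounds $\underline{c}_k\le c_k(t)\le\overline{c}_k$ are exactly what make the effort uniformly integrable and the argument go through.
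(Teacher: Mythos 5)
The paper states this theorem without giving a proof (the appendix contains no argument for it), so there is nothing to compare against line by line; I can only assess your proposal on its own terms. Your overall architecture --- Schauder's fixed-point theorem applied to the best-response map on a convex, $W_1$-compact set of laws with uniformly bounded second moment --- is the natural way to make the claim precise, and your energy estimate is correct and correctly locates where the hypothesis enters: comparing with the null control gives $\underline{c}_k\,\mathbb{E}\int_0^T (a^*)^2\,dt\le 2\|R\|_\infty$, hence a moment bound on the terminal law that is uniform in $\mu$. You are also right, and it is the sharpest observation in your write-up, that the Hopf--Cole/Schr\"odinger-bridge reduction fails for a time-dependent weight $c_k(t)$, so the explicit Gibbs density \eqref{eq::Phi_operator} is no longer available.

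The gap is in the continuity step, and it is a real one because your justification there contradicts your own (correct) observation. You assert that the stability of the control problem under bounded, a.e.-convergent terminal rewards is ``exactly the argument of \cite{Bayraktar_2016}, whose only dependence on the cost is through its two-sided bound.'' That is not so: the continuity of $\Phi_k$ in \cite{Bayraktar_2016} is proved by dominated convergence applied to the explicit exponential density of the best response --- precisely the formula you have just disqualified. Once that formula is gone, continuity of $\mu\mapsto\Phi_k(\mu)$ must be established directly, e.g.\ by showing that the value functions of the uniformly parabolic HJB equation $\partial_t V+\frac{(\partial_x V)^2}{4c_k(t)}+\frac{\sigma_k^2}{2}\partial_{xx}V=0$ with terminal data $R_{\mu_n}$ converge, together with their spatial gradients on $[0,T)\times\R$ (interior parabolic estimates, using $0<\underline{c}_k\le c_k\le\overline{c}_k$ to control the Hamiltonian), so that the optimal feedbacks $a^*_n=\partial_xV_n/(2c_k(t))$ and hence the terminal laws converge. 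This is doable but is the actual content of the proof, not a citation; note also that $R_{\mu}$ is only measurable (monotone) rather than continuous when $F_\mu$ has atoms, which must be handled in the terminal-data stability argument. As written, the proposal establishes well-posedness and compactness but leaves the continuity of the fixed-point map unproven.
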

Nonetheless, there is no more explicit formula (even for the best response of the agents) in presence of time-varying cost of effort, as the Schrödinger bridge method requires a quadratic cost of effort that is constant over time. To illustrate the behavior of the agents with a time-dependent cost of effort, we draw in~\Cref{fig::traj_4k_1_nonlinear} the trajectories of the same 20 consumers as in~\Cref{fig::traj_4k_1} obtained with the incentive depicted in~\Cref{fig::retailer_4k_1} and a cost of effort $c_k(t) = 24 - 1.5t$ \euro/MWh. As expected, the energy savings are greater than in the previous case (the cumulated consumption at the end of the horizon is now around $27.6$MWh whereas it was around $28.5$MWh with $c_k(t) = 24$\euro/MWh.
\begin{figure}[!ht]
    \centering
  \includegraphics[width=0.55\linewidth,clip=true,trim=.2cm 0cm 1.3cm 1.2cm]{./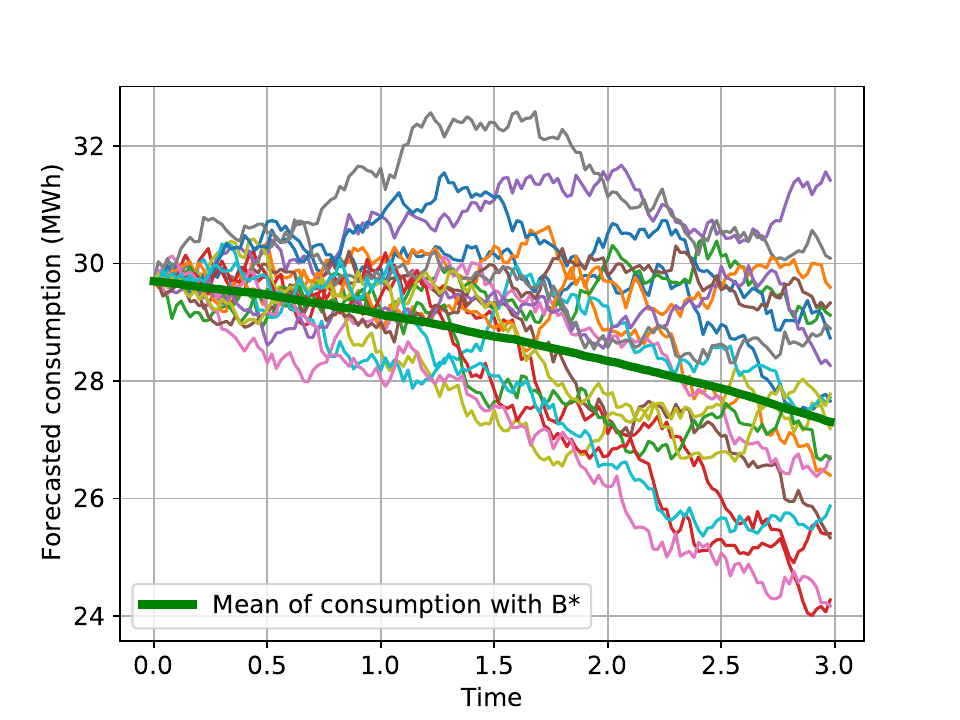}  
  \caption{Deviation of the consumption from the no-bnous case.\\Trajectories of 20 consumers from sub-population 1 obtained with the optimal control from the mean-field approximation and a time-dependent cost of effort.}
  \label{fig::traj_4k_1_nonlinear}
\end{figure}

\section{Conclusion}
In this work, we study a Principal-Agent mean-field game where the incentive designed by the principal is based on the ranking of each agent, initiating a competition between them. This specific framework allows us to derive explicit formula for the (unique) mean-field Nash equilibrium for the agents' problem. Incorporating this characterization in the principal profit maximization problem, we prove in the homogeneous setting that the optimal reward can be obtained by solving a convex reformulation of the problem in the distribution space. We exploit the optimality conditions of the latter to then get the optimal reward through a fixed-point equation. In the general case, we show that the problem can be recast as a finite-dimensional maximization over a box, which can be efficiently solved by numerical algorithms. In future work, we plan to explore the convergence of the model restricted to piecewise linear rewards towards the infinite-dimensional case when the discretization step goes to zero.

We apply the results to electricity markets where a provider aims at designing a reward for its consumers portfolio in order to incentivize them to energy sobriety. We construct realistic instances for the French market of Energy Saving Certificates, and numerically observe that the rank-based rewards can constitute efficient mechanisms to make substantial energy reduction, while staying sufficiently simple to be easily grasped by the consumers.

\section*{Acknowledgments}
We thank the reviewers for their fruitful comments, which contributed to an improved version of the paper.

\bibliographystyle{siamplain}
\bibliography{bibliography}

\appendix
\section{Proofs}
In this section, we collect several results and proofs.
\begin{lemma}\label{prop::convo}
\begin{equation}\label{eq::shift}
\fnom_k(x) \exp\left(\tau x\right)
=\exp\left(\tau \xnom_k +\frac{1}{2}\tau^2\sigma^2_k T\right)\varphi\left(x;\xnom_k+\tau \sigma_k^2 T,\sigma_k\sqrt{T}\right)\enspace.
\end{equation}
\end{lemma}
\proof
$$
\begin{aligned}
    \fnom(x) \exp\left(\tau x\right)
    &= \frac{1}{\sigma\sqrt{T}\sqrt{2\pi}}\exp\left(-\frac{(x - \xnom)^2-2\tau \sigma^2 T x}{2\sigma^2 T}\right)\\
    &= \frac{1}{\sigma\sqrt{T}\sqrt{2\pi}}\exp\left(-\frac{(x - [\xnom+\tau \sigma^2 T])^2}{2\sigma^2 T} + \tau \xnom + \frac{1}{2}\tau^2\sigma^2 T\right)
\end{aligned}
$$
\endproof

\begin{lemma}[Set of attainable equilibria]\label{prop::map_reward}
\begin{enumerate}[label=(\roman*)]
    \item For a given cluster $k$, the set of equilibria attainable by an additional reward function $B$ is given by
    $$\mathcal{E}_k = \{\mu\in\mathcal{P}^+(\R): 2c_k\sigma_k^2\ln \zeta_{k,\mu_k} (q_{\mu_k}(r)) + pq_{\mu_k}(r) \text{ is bounded and decreasing} \}\enspace,$$
    with $\zeta_{k,\mu} := f_{\mu}/\fnom_k$.

    \item If $\mu_k \in \mathcal{E}_k$, then 
    $$\epsilon_k^{-1}(\mu_k) = \left\{2c_k\sigma_k^2\ln \zeta_{k,\mu_k} (q_{\mu_k}(r)) + pq_{\mu_k}(r) + C_k:C_k\in\R\right\}$$
    
    \item Suppose that additional reservation ``utility" constraint $V_k(R,\mu_k)\geq \Vpi_{k}+\tau \xnom_k$ and budget constraint $\int_0^1 B(r) dr \leq K$, then the constant $C_k$ in (ii) is restricted to $$\Vpi_{k}+\tau \xnom_k\leq C_k\leq K - 2c_k\sigma_k^2\int_0^1\ln \zeta_{k,\mu_k}(q_{\mu_k}(r))dr - p m_{\mu_k}\enspace.$$
    In particular, such a $C_k$ exists if and only if 
    $$2c_k\sigma_k^2\int_0^1 \ln\zeta_{k,\mu_k}(q_{\mu_k}(r))dr - p m_{\mu_k} \leq K - \Vpi_{k}-\tau \xnom_k\enspace.$$
\end{enumerate}
\end{lemma}
\proof
Items (i) and (iii) directly comes from~\cite{Bayraktar_2019}. For (ii), the condition of~\Cref{lemma:condition_quantile} is verified:
$$
\int_0^r \exp\left(-\frac{R_{\mu}(q_{\mu}(z))}{2c\sigma^2}\right) dz
= \int_0^r \left(\zeta_\mu (q_\mu(r))\right)^{-1}dz
= \int_{-\infty}^{q_\mu(r)} \fnom(z)dz
\enspace.$$
As the uniqueness is concerned, suppose that $B$ and $B'$ lead to the same distribution $\mu$ with $p\neq 0$. Then, $B$ and $B'$ lead to the same distribution $\nu$ with $p=0$, see~\Cref{theorem:q_mu}. Therefore, as shown in~\cite{Bayraktar_2019}, $B$ and $B'$ are equal up to a constant. 
\endproof

\subsection*{Proof of~\Cref{theorem:q_mu}}\label{app::proof_shift}
We give here the proof for a given class and, for simplicity, we omit the dependence in $k$.\\

\textit{Characterization of an equilibrium.} First, suppose that $\nu$ is an equilibrium distribution for the case $p=0$. Let $\gamma \in \R$ whose value will be determined later. By definition of $f_\nu$ (see \eqref{eq::Phi_operator}), we get
$$
\begin{aligned}
&\int_0^r \exp\left(-\frac{B(z) - p(q_\nu(z)+\gamma)}{2c\sigma^2}\right) dz =\int_{-\infty}^{q_\nu(r)} \exp\left(-\frac{B(F_\nu(x))}{2c\sigma^2} + \frac{p}{2c\sigma^2}(x+\gamma)\right) f_\nu(x)dx\\
&= \frac{e^{\frac{p}{2c\sigma^2}\gamma}}{\gamma(\nu)}\int_{-\infty}^{q_\nu(r)} \exp\left(-\frac{B(F_\nu(x))}{2c\sigma^2} + \frac{p}{2c\sigma^2}x\right) \fnom(x)\exp\left(\frac{B(F_\nu(x))}{2c\sigma^2}\right)dx.
\end{aligned}
$$
Using \eqref{eq::shift} with $\tau = \frac{p}{2c\sigma^2}$ and the change of variables $u = \frac{x - (\xnom+\frac{pT}{2c})}{\sigma\sqrt{T}}$, we deduce
$$
\begin{aligned}
&\int_0^r \exp\left(-\tfrac{B(z) - p(q_\nu(z)+\gamma)}{2c\sigma^2}\right) dz = \frac{1}{\gamma(\nu)}e^{\frac{1}{2c\sigma^2}\left(\gamma + p \xnom + \frac{Tp^2}{4c}\right)}\int_{-\infty}^{q_\nu(r)} \varphi\left(x;\xnom+\frac{pT}{2c},\sigma\sqrt{T}\right)dx \\
&= \frac{1}{\gamma(\nu)\sqrt{2\pi}}e^{\frac{1}{2c\sigma^2}\left(\gamma + p \xnom + \frac{Tp^2}{4c}\right)}\int_{-\infty}^{\frac{q_\nu(r)-(\xnom+\frac{pT}{2c})}{\sigma\sqrt{T}}} \exp\left(-\frac{u^2}{2}\right)du \\
&= \frac{1}{\gamma(\nu)}e^{\frac{1}{2c\sigma^2}\left(\gamma + p \xnom + \frac{Tp^2}{4c}\right)}N\left(\frac{q_\nu(r)-(\xnom+\frac{pT}{2c})}{\sigma\sqrt{T}}\right)\enspace.
\end{aligned}
$$
Therefore, taking $\gamma = -\frac{pT}{2c}$, we end up with 
$$N\left(\frac{\left[q_\nu(r)-\frac{pT}{2c}\right]-\xnom}{\sigma\sqrt{T}}\right) = \frac{\int_0^r \exp\left(-\frac{B(z) - p\left[q_\nu(z)-\frac{pT}{2c}\right]}{2c\sigma^2}\right) dz}{\int_0^1 \exp\left(-\frac{B(z) - p\left[q_\nu(z)-\frac{pT}{2c}\right]}{2c\sigma^2}\right) dz} \enspace .$$
By setting $q_{\mu}(r) = q_\nu(r)-\frac{pT}{2c}$, we recover the characterization of an equilibrium (see~\Cref{lemma:condition_quantile}).

Conversely, suppose now that $\mu$ is the equilibrium for $p\in\R$. Then, following the same steps, 
$$N\left(\frac{\left[q_\mu(r)+\frac{pT}{2c}\right]-\xnom}{\sigma\sqrt{T}}\right) = \frac{\int_0^r \exp\left(-\frac{B(z) }{2c\sigma^2}\right) dz}{\int_0^1 \exp\left(-\frac{B(z)}{2c\sigma^2}\right) dz} \enspace .$$
The distribution $\nu$ defined as $q_\nu(r) = q_\mu(r) + \frac{pT}{2c}$ is a valid equilibrium.\\

\textit{Uniqueness of the equilibrium.} Suppose that there exist two distinct equilibrium distributions $\mu$ and $\mu'$ such that $q_\mu \neq q_{\mu'}$. Then by the above proof, we derive
the existence of two distinct equilibrium distributions $\nu$ and $\nu'$  for the case $p=0$ satisfying $q_\nu \neq q_{\nu'}$. We get a contradiction by the uniqueness of the equilibrium for purely rank-based rewards.

\subsection*{Proof of~\Cref{lemma::optimal_reward}}\label{app::optimal_reward}
We apply the KKT conditions on~\eqref{eq:relaxed_problem}: for $\mu^*$-almost every $x$ in $\R$,
\begin{equation*}
\left\{
\begin{aligned}
    &0 = x \kappa'(m_{\mu^*}) +2c\sigma^2\ln\left(\frac{f_{\mu^*}(x)}{\fnom(x)}\right) + \lambda,\\
    &\int_{-\infty}^{+\infty} f_{\mu^*}(y) dy = 1
    \end{aligned}
    \quad,\lambda\in \R\right.
\end{equation*}
From which we can deduce that
$
f_{\mu^*}(x) = \fnom(x) \exp\left(- \frac{x\kappa'(m_{\mu^*}) + \lambda}{2c\sigma^2}\right)
$. The Lagrange multiplier $\lambda$ is then computed using the normalization condition on $f_{\mu^*}$.

\subsection*{Proof of~\Cref{prop::optimal_reward}}
Integrating~\eqref{eq::fixed_point_fmu} gives us

$$
\begin{aligned}
m_\mu = \int_{-\infty}^{+\infty} y f_\mu(y) dy &= \frac{1}{\alpha(\mu)}\int_{-\infty}^{+\infty} y \fnom(y) \exp\left(-y \frac{\kappa'(m_\mu)}{2c\sigma^2}\right) dy\\
&=\int_{-\infty}^{+\infty} y \phi\left(y;\xnom-\frac{T\kappa'(m_\mu)}{2c},\sigma\sqrt{T}\right)dy\\
&= \xnom -\frac{T\kappa'(m_\mu)}{2c} = \xpi + \frac{T}{2c}\delta(m_\mu)\enspace,
\end{aligned}
$$
where we use~\Cref{prop::convo} between the two first lines in order to recover a gaussian process.

We can now recover the reward:
$$
\begin{aligned}
B^*(r) &=\Vpi+\tau \xnom + 2c\sigma^2 \ln\left(\zeta_{\mu^*}(q_{\mu^*}(r))\right) + p q_{\mu^*}(r)\\
&= \Vpi+\tau \xnom + q_{\mu^*}(r) \left[p - \kappa'(m_{\mu^*})\right] - 2c\sigma^2 \ln\left(\int_{-\infty}^{+\infty} \fnom(y) \exp\left(-y \frac{\kappa'(m_{\mu^*})}{2c\sigma^2}\right)dy\right)\\
&=\Vpi +\tau \xnom+ \frac{c}{T}\left[(\xnom)^2 - m^2\right] + q_{\mu^*}(r) \delta(m_{\mu^*}) \\
&=\tau \xnom + \frac{c}{T}\left[(\xpi)^2 - m^2\right] + q_{\mu^*}(r) \delta(m_{\mu^*}) \enspace,
\end{aligned}
$$
where we use~\Cref{prop::convo} to get the value of the integral.
From the definition of the provider objective,
$$\begin{aligned}
\pi & = pm - \kappa(m) - \int_0^1 B^*(r)dr\\
&= pm - \kappa(m) - \frac{c}{T}\left[(\xpi)^2 - m^2\right] - m \left[p - \kappa'(m)\right] - \tau \xnom\\
&= m \kappa'(m) - \kappa(m) +\left(\frac{\xpi + m}{2}\right)\frac{2c}{T}\left(m - \xpi\right)- \tau \xnom\\
&= m \kappa'(m) - \kappa(m) + \left(\frac{\xpi + m}{2}\right) \delta(m)- \tau \xnom \enspace.
\end{aligned}$$

\subsection*{Proof of~\Cref{theorem::invariance}}
\begin{itemize}
    \item[(i)] By construction, the reward $\tilde{B}$ is also bounded and decreasing. Then, the cost induced by the additional reward is the same with $R^*$ and $\hat{B}$:
    $$\int_{-\infty}^{+\infty} R^*_{\mu^*}(x)f_{\mu^*}(x)dx = \int_0^1 \hat{B}(r)dr\enspace.$$
    Finally, $\mu^*$ is also an equilibrium for the reward $\hat{B}$:
    $$\frac{1}{\hat{\gamma}(\mu^*)}\fnom(x)\exp\left(\frac{\hat{B}(F_{\mu^*}(x))}{2c\sigma^2}\right) = \frac{1}{\gamma^*(\mu^*)}\fnom(x)\exp\left(\frac{R^*_{\mu^*}(x)}{2c\sigma^2}\right) = f_{\mu^*}\enspace,$$
    where $\hat{\gamma}$ and $\gamma^*$ are computed respectively with $\hat{B}$ and $R^*$. The last equality comes from the characterization of an equilibrium.
    Therefore, the reward function $\hat{B}$ satisfies the constraints and produces the same objective value as $R^*$. It is also optimal.
    \item[(ii)] The proof follows the same ideas as at the previous item.
\end{itemize}

\end{document}